\newcommand{\nhm}{nonhomogeneous }
\newtheorem{thrm}{Theorem}
\newtheorem{fact}{Fact}
\newtheorem{defn}{Definition}
\newtheorem{lemma}{Lemma}
\newtheorem{prop}{Proposition}
\newtheorem{remark}{Remark}
\newcommand{\rE}{\mathrm E}
\begin{document}

\author{Michael Grabchak\footnote{Email address: mgrabcha@uncc.edu}\ \ and Isaac M.~Sonin\footnote{Email address: imsonin@uncc.edu} \and University of North Carolina Charlotte}
\title{A Zero-One Law for Markov Chains}
\date{\today}
\maketitle

\begin{abstract}
We prove an analog of the classical Zero-One Law for both homogeneous and nonhomogeneous Markov chains (MC). Its almost precise formulation is simple: given any event $A$ from the tail $\sigma$-algebra of MC $(Z_n)$, for large $n$, with probability near one, the trajectories of the MC are in states $i$, where $P(A|Z_n=i)$ is either near $0$ or near $1$. A similar statement holds for the entrance $\sigma$-algebra, when $n$ tends to $-\infty$. To formulate this second result, we give detailed results on the existence of nonhomogeneous Markov chains indexed by $\mathbb Z_-$ or $\mathbb Z$ in both the finite and countable cases. This extends a well-known result due to Kolmogorov. Further, in our discussion, we note an interesting dichotomy between two commonly used definitions of MCs.
\end{abstract}

\section{Introduction}

This paper addresses two problems in the study of nonhomogeneous Markov Chains (MC), where we understand homogeneous MCs to be an important special case. The first is a zero-one law for MCs and the second is an extension of a result, due to Kolmogorov, on the existence of MCs indexed by the integers and thus not having an initial starting point. 

The zero-one law for MCs was first formulated in Sonin (1991) \cite{Son91a}, but without detailed proofs and with some gaps. We remedy this by giving two detailed proofs. The first is simple, but does not illustrate the underlying mechanics. The second is constructive and helps to illustrate how past (observed) events can inform future (tail) events. In the case where the MC is indexed by the integers, we also formulate a zero-one law for the entrance $\sigma$-algebra, when $n$ tends to $-\infty$.

The second topic is related to a problem first posed by Kolmogorov in a short paper from 1936 \cite{Kolm1936}. While that paper is usually remembered for introducing the concept of a reversible MC, most of it deals with the following question: Given a sequence of stochastic matrices $(P_{n})_{n\in\mathbb N_-}$, does there exist a Markov chain indexed by $\mathbb Z_-$ with this as its sequence of transition matrices? When the number of states is finite and fixed over time, \cite{Kolm1936} answers in the affirmative. In this paper, we extend that result to the case where the cardinalities are finite, but may be changing and may approach infinity. We further show that, in the case where the cardinalities are countably infinite, the MC may not exist and we give a sufficient condition for when it does. These results help to explain when a MC indexed by the integers exists and thus when we can talk about the entrance $\sigma$-algebra. In our discussion, we note an interesting dichotomy between two commonly used definitions of MCs. The first is in terms of a sequence of random variables that satisfies the Markov property and the second is in terms of a sequence of transition matrices (or, in the homogeneous case, just one transition matrix).

The rest of this paper is organized as follows. In Section \ref{sec: zero one law} we formulate the zero-one law for the tail $\sigma$-algebra. In Section \ref{sec: entrance} we extend this to the entrance $\sigma$-algebra and discuss the above mentioned dichotomy. In Section \ref{sec: MC past} we discuss the existence of MCs on $\mathbb Z_-$ and $\mathbb Z$ and extend the results of \cite{Kolm1936}. Proofs are postponed to Section \ref{sec: proofs all}. We conclude in Section \ref{sec: conc} by discussing some directions for future work. A short historical note is given in Section \ref{sec: hist}.

Before proceeding, we introduce some notation. Let $\mathbb Z_+=\{0,1,2,\dots\}$, $\mathbb N=\{1,2,\dots\}$, $\mathbb Z_-=\{\dots,-2,-1,0\}$, and $\mathbb N_-=\{\dots,-2,-1\}$.  Let $\mathbb R^d$ be the space of $d$-dimensional {\em row} vectors. For $m\in\mathbb R^d$, we write $m(i)$ to denote the $i$th coordinate of $m$. Let $e_i^{(d)}\in\mathbb R^d$ be the $i$th row of the $d\times d$-dimensional identity matrix. When we talk about convergence of a sequence of finite or infinite matrices, we mean pointwise convergence of their coordinates. For a set $A$ we write $|A|$ to denote its cardinality and $I_A$ to denote the indicator function on $A$. We write $\bigtriangleup$ to denote the symmetric difference operator. If $E,E_1,E_2,\dots$ are events in the same probability space, we write $E_n\to E$ a.s.\ to mean that $P(E_n\bigtriangleup E)\to0$ as $n\to\infty$.

\section{Zero-One Law for Markov Chains}\label{sec: zero one law}

The classical Zero-One Law plays an important role in probability theory. This may be illustrated by the fact that it is the very first theorem presented in the well-known advanced textbook on probability theory by D.~Stroock \cite{Stro11}. Before formulating this result, we introduce some notation. Let
$$
X_0,X_1,X_2,\dots
$$
be a sequence of random variables defined on some probability space $(\Omega,\mathcal F,P)$. Let $\mathcal F_{nm}=\sigma(X_n,\dots,X_{m})$ for $0\le n\le m<\infty$, let $\mathcal F_{n\infty}=\sigma(X_n,\dots)$, and let $\mathcal T=\bigcap_{n\ge0}\mathcal F_{n\infty}$ be the tail $\sigma$-algebra.

\begin{fact}\textbf{(Kolmogorov's Zero-One Law)}
Assume that $(X_n)$ is a sequence of independent random variables. If $A\in \mathcal T$, then $P(A)=0$ or $P(A)=1$.
\end{fact}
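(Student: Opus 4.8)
The plan is to follow the classical route: show that the tail $\sigma$-algebra $\mathcal T$ is independent of itself, so that every $A\in\mathcal T$ satisfies $P(A)=P(A\cap A)=P(A)P(A)$, which forces $P(A)\in\{0,1\}$.

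First I would record the immediate consequence of the independence of the $X_n$: for each fixed $n$, the $\sigma$-algebra $\mathcal F_{0n}=\sigma(X_0,\dots,X_n)$ is independent of $\sigma(X_{n+1},X_{n+2},\dots)$. Since $\mathcal T=\bigcap_{k\ge0}\mathcal F_{k\infty}\subseteq\sigma(X_{n+1},X_{n+2},\dots)$, it follows that $\mathcal F_{0n}$ is independent of $\mathcal T$ for every $n\ge0$. Because the $\mathcal F_{0n}$ are increasing in $n$, the union $\mathcal A=\bigcup_{n\ge0}\mathcal F_{0n}$ is an algebra, hence a $\pi$-system, and it too is independent of $\mathcal T$.

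Next I would upgrade this to independence of $\sigma(\mathcal A)=\mathcal F_{0\infty}=\sigma(X_0,X_1,\dots)$ from $\mathcal T$. Fix $B\in\mathcal T$; the collection $\mathcal L_B=\{E\in\mathcal F:P(E\cap B)=P(E)P(B)\}$ is a $\lambda$-system containing the $\pi$-system $\mathcal A$, so by Dynkin's $\pi$-$\lambda$ theorem it contains $\sigma(\mathcal A)=\mathcal F_{0\infty}$. As $B\in\mathcal T$ was arbitrary, $\mathcal F_{0\infty}$ is independent of $\mathcal T$. Finally, since $\mathcal T\subseteq\mathcal F_{0\infty}$, the $\sigma$-algebra $\mathcal T$ is independent of itself; taking $E=B=A$ for $A\in\mathcal T$ yields $P(A)=P(A)P(A)$ and hence $P(A)\in\{0,1\}$. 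The only real obstacle is the measure-theoretic bookkeeping in the two limiting passages — from finitely many coordinates to the tail, and from the generating algebra $\mathcal A$ to $\mathcal F_{0\infty}$ — both dispatched by the monotone class / $\pi$-$\lambda$ machinery rather than by anything probabilistically delicate.
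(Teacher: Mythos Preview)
Your argument is correct and is exactly the classical textbook proof of Kolmogorov's zero-one law. Note, however, that the paper does not actually prove Fact~1: it is stated as a known result, with a pointer to Section~14.3 of \cite{Williams:1991} for a proof via L\'evy's Upward Theorem, and a remark in Section~\ref{sec: const proof} that an approximation argument (in the style of the Hewitt--Savage proof) also works. Your $\pi$-$\lambda$ approach is the standard self-contained route; the L\'evy-martingale route the paper alludes to instead shows $P(A\mid\mathcal F_{0n})\to P(A\mid\mathcal F_\infty)=I_A$ a.s.\ while, by independence, $P(A\mid\mathcal F_{0n})=P(A)$, forcing $I_A$ to be a.s.\ constant. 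Both arguments are short; yours avoids martingale machinery, while the martingale version is what the paper later adapts to the Markov setting in Section~\ref{sec: tech proof}.
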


While the proof of this law is quite simple, it is important to note that the tail $\sigma$-algebra is a very rich object and may have many complicated events. This is true even for fairly simple situations such as repeatedly tossing a coin.

Perhaps, the most natural relaxation of the assumption of independence is to assume that the sequence of random variables forms a Markov chain. In this case, the tail $\sigma$-algebra does not, in general, satisfy a zero-one law and may contain a number of masses, see \cite{Cohn70}, \cite{Iosifescu:1979}, and the references therein. In Blackwell and Freedman (1964) \cite{Blackwell:Freedman:1964} the following result for when a MC satisfies the zero-one law is given.

\begin{fact}\textbf{(Blackwell and Freedman's Zero-One Law)}
Assume that $(X_n)$ is a homogeneous and recurrent MC on a finite or countably infinite state space such that $P(X_0=i)=1$ for some $i$. If $A\in \mathcal T$, then $P(A)=0$ or $P(A)=1$.
\end{fact}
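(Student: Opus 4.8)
The plan is to reduce the statement, via the excursion (regeneration) structure of the chain, to the Hewitt--Savage $0$-$1$ law. Since the chain is recurrent and $P(X_0=i)=1$, state $i$ is visited infinitely often a.s.; let $0=\sigma_0<\sigma_1<\sigma_2<\cdots$ be the successive visit times. Each $\sigma_k$ is an a.s.\ finite stopping time, the increments $T_k:=\sigma_k-\sigma_{k-1}$ ($k\ge1$) are i.i.d., and --- this is where homogeneity enters --- by the strong Markov property, conditionally on $\mathcal F_{\sigma_k}:=\sigma(X_0,\ldots,X_{\sigma_k})$ the post-$\sigma_k$ process $(X_{\sigma_k+n})_{n\ge0}$ is again a copy of $(X_n)_{n\ge0}$ started at $i$. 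Because $\sigma_k\uparrow\infty$ a.s., every $X_j$ is eventually $\mathcal F_{\sigma_k}$-measurable, so $\sigma\bigl(\bigcup_k\mathcal F_{\sigma_k}\bigr)=\sigma(X_0,X_1,\ldots)$; as $A\in\mathcal T$ lies in this $\sigma$-algebra, L\'evy's $0$-$1$ law gives $P(A|\mathcal F_{\sigma_k})\to I_A$ a.s.\ as $k\to\infty$.

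The key step is to show that $P(A|\mathcal F_{\sigma_k})$ is in fact a deterministic function of $\sigma_k$ alone. Write $A=\{(X_n)_{n\ge0}\in B\}$; since $A\in\mathcal T$, the set $B$ is a tail set, so $I_A$ is unaffected by altering finitely many coordinates of the path. Hence, on $\{\sigma_k=m\}$ the value of $I_A$ depends only on $(X_{m+1},X_{m+2},\ldots)$ --- the content of the first $m+1$ coordinates being irrelevant --- while, by the strong Markov property, the conditional law of $(X_{m+1},X_{m+2},\ldots)$ given $\mathcal F_{\sigma_k}$ on $\{\sigma_k=m\}$ depends only on $m$, not on the observed path. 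Combining these two facts yields $P(A|\mathcal F_{\sigma_k})=\alpha(\sigma_k)$ a.s.\ for some deterministic $\alpha\colon\mathbb Z_+\to[0,1]$.

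To conclude, observe that $\alpha(\sigma_k)=\alpha(T_1+\cdots+T_k)\to I_A$ a.s. Since $T_1+\cdots+T_k$ is a symmetric function of $T_1,\ldots,T_k$, the random variable $\limsup_{k\to\infty}\alpha(T_1+\cdots+T_k)$ is invariant under every finite permutation of the i.i.d.\ sequence $(T_k)$, so by the Hewitt--Savage $0$-$1$ law (a strengthening of Fact 1) it is a.s.\ equal to a constant $c$. As it also equals $I_A$ a.s.\ and $I_A\in\{0,1\}$, we get $c\in\{0,1\}$ and $P(A)=c$.

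The main obstacle is the key step above: collapsing the conditional probability at a return time into a function of the return time only. This is precisely where recurrence (to make the $\sigma_k$ a.s.\ finite), homogeneity (for the strong Markov property), and the tail nature of $A$ (to erase the \emph{content}, as opposed to the \emph{length}, of the observed initial segment) are used simultaneously; the remaining ingredients --- L\'evy's and Hewitt--Savage's $0$-$1$ laws --- are standard.
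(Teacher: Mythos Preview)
The paper does not prove Fact~2; it is quoted as a known result with a citation to Blackwell and Freedman (1964), so there is no ``paper's own proof'' to compare against.

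Your argument is correct. The regeneration approach---collapsing $P(A\mid\mathcal F_{\sigma_k})$ to a deterministic function $\alpha(\sigma_k)$ of the $k$th return time, then applying Hewitt--Savage to the i.i.d.\ excursion lengths $(T_k)$---is a clean and standard route. Two small comments. First, your ``key step'' can be stated a bit more sharply: on $\{\sigma_k=m\}$ one has $X_m=i$, so by the ordinary Markov property $P(A\mid\mathcal F_{\sigma_k})=P(A\mid X_m=i)=:\alpha(m)$; the dependence on $m$ enters only through which measurable function $g_m$ realises $I_A=g_m(X_{m+1},X_{m+2},\ldots)$, since by homogeneity the conditional law of the post-$m$ path given $X_m=i$ is the same for every $m$. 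Second, you are right that Hewitt--Savage (rather than Kolmogorov's $0$--$1$ law for $(T_k)$) is needed: a finite permutation of $T_1,\ldots,T_n$ leaves $T_1+\cdots+T_k$ unchanged for $k\ge n$, so $\limsup_k\alpha(T_1+\cdots+T_k)$ is exchangeable, but it is not in the tail $\sigma$-algebra of $(T_k)$ because the partial sums depend on all the early terms.
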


If we remove any of the assumptions on the MC, then there will be examples where an event $A\in\mathcal T$ with $0<P(A)<1$ exists. On the other hand, Sonin (1991) \cite{Son91a} showed that something akin to the zero-one law nevertheless holds. This results is true even for general nonhomogeneous MCs with no assumptions made on the initial distribution or on the sequence of transition matrices. We begin by describing the general setup.

Let $(S_n)_{n\in\mathbb Z_+}$ be a sequence of finite or countably infinite state spaces and let $(Z_n)_{n\in\mathbb Z_+}$ be a MC with $Z_n$ taking values in $S_n$. Here by MC we mean that the random sequence $(Z_n)_{n\in\mathbb Z_+}$ satisfies the Markov property. Fix $A\in\mathcal T$ and for $0\le a\le b\le1$ let
$$
S_n(a,b) = \{i\in S_n : a\le P(A|Z_n=i)\le b\}.
$$

\begin{thrm}\label{thrm: main}
If $0<p<q<1$, then the following hold:

a) $\lim_{n\to\infty}P(Z_n\in S_n(q,1))=P(A)$,

b) $\lim_{n\to\infty}P(Z_n\in S_n(p,q))=0$, and

c) $\lim_{n\to\infty}P(Z_n\in S_n(0,p))=1-P(A)$.
\end{thrm}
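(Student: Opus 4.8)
The plan is to realize the numbers $P(A\mid Z_n=i)$ as the values of a Doob martingale and then apply L\'evy's upward convergence theorem. Concretely, set $M_n=E[I_A\mid\mathcal F_{0n}]$. Since the $\sigma$-algebras $\mathcal F_{0n}=\sigma(Z_0,\dots,Z_n)$ increase in $n$ and $I_A$ is bounded, $(M_n)$ is a bounded martingale with respect to $(\mathcal F_{0n})$. Because $A\in\mathcal T\subseteq\mathcal F_{0\infty}=\sigma(Z_0,Z_1,\dots)$, L\'evy's theorem yields $M_n\to E[I_A\mid\mathcal F_{0\infty}]=I_A$ almost surely (and in $L^1$).

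The crux of the argument, and the step I expect to be the main obstacle, is the identity $M_n=P(A\mid Z_n)$, i.e.\ $M_n=g_n(Z_n)$ where $g_n(i)=P(A\mid Z_n=i)$. This is where the tail hypothesis enters essentially: since $A\in\mathcal T$, we have $A\in\mathcal F_{n\infty}=\sigma(Z_n,Z_{n+1},\dots)$ for every $n$, and the Markov property says that, conditionally on $Z_n$, the ``past'' $\sigma$-algebra $\mathcal F_{0n}$ and the ``future'' $\sigma$-algebra $\mathcal F_{n\infty}$ are independent; hence $E[I_A\mid\mathcal F_{0n}]=E[I_A\mid Z_n]$. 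In the countable-state setting this conditional independence can be verified directly on cylinder events, so no delicate measure theory is needed, but the bookkeeping should be spelled out, together with the harmless convention that states $i$ with $P(Z_n=i)=0$ are simply ignored (they contribute nothing to $P(Z_n\in S_n(\cdot,\cdot))$).

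Once $M_n=g_n(Z_n)$ is established, the three claims follow by rewriting the events in terms of $M_n$: since $P(A\mid Z_n=i)\ge0$ always, $\{Z_n\in S_n(q,1)\}=\{M_n\ge q\}$, $\{Z_n\in S_n(p,q)\}=\{p\le M_n\le q\}$, and $\{Z_n\in S_n(0,p)\}=\{M_n\le p\}$. Because $M_n\to I_A$ a.s.\ and $0<p<q<1$, on the event $A$ we have $M_n\to 1>q$, so eventually $M_n\ge q$, while on $A^c$ we have $M_n\to 0<p$, so eventually $M_n\le p$. Hence $I_{\{M_n\ge q\}}\to I_A$, $I_{\{M_n\le p\}}\to I_{A^c}$, and $I_{\{p\le M_n\le q\}}\to 0$ almost surely, and the bounded convergence theorem converts each of these into the corresponding limit of probabilities, giving (a), (b), and (c). Alternatively, the $L^1$ convergence $M_n\to I_A$ can be used directly in place of a.s.\ convergence plus bounded convergence.
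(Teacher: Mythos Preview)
Your proposal is correct and is essentially the paper's first proof (Section~\ref{sec: tech proof}): the paper likewise uses the Markov property to identify $P(A\mid\mathcal F_{0n})$ with $P(A\mid Z_n)$, applies L\'evy's upward theorem to get $P(A\mid Z_n)\to I_A$ a.s., and then argues (Lemma~2) that $\{Z_n\in S_n(q,1)\}\to A$ a.s.\ exactly as you do via the indicators $I_{\{M_n\ge q\}}$. The paper also gives a second, constructive proof that approximates $A$ by past events $A_{kn}\in\mathcal F_{kn}$ and avoids L\'evy's theorem, but your argument matches the first proof almost line for line.
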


\begin{remark}\label{remark: main}
From the proof we will see that the convergence is not just of probabilities, but of events. Specifically, for any $0<p<q<1$ we have the stronger result that

$ a')$ $(Z_n\in S_n(q,1))\to A$ a.s.,

$b')$ $(Z_n\in S_n(p,q))\to\emptyset$ a.s., and

$c')$ $(Z_n\in S_n(0,p))\to A^c$ a.s.
\end{remark}

Theorem \ref{thrm: main} means that, for large $n$, with probability near one, the trajectories of $(Z_n)$ are in states $i$, where $P(A|Z_n=i)$ is either near $0$ or near $1$. In \cite{Son91a} it is stated that this results ``may be known but we know of no reference.'' At this point we still have not seen a result of this type formulated elsewhere.
However, we note that related ideas appear in \cite{Cohn:1974} and \cite{Iosifescu:1979}. The proof of Theorem \ref{thrm: main}, as given in \cite{Son91a}, is incomplete and has gaps. 
In Sections \ref{sec: tech proof} and \ref{sec: const proof} we give two detailed proofs. The first is simpler, but uses heavy machinery that obscures the underlying mechanics. The second is longer but constructive. It helps to illuminate how the $\sigma$-algebras $\mathcal F_{kn}$  and $\mathcal F_{k\infty}$ converge to the tail $\sigma$-algebra $\mathcal T$.

\section{Zero-One Law for the Entrance $\sigma$-Algebra and a Dichotomy in the Definition of a MC}\label{sec: entrance}

In this section we extend the zero-one law for MCs to the entrance $\sigma$-algebra. We begin the discussion in a more general context. Let
$$
\dots, X_{-2}, X_{-1},X_0,X_1,X_2,\dots
$$
be a sequence of random variables indexed by $\mathbb Z$. The so-called {\em entrance $\sigma$-algebra} is defined by $\mathcal H=\bigcap_{n\le0}\mathcal F_{-\infty n}$, where $\mathcal F_{-\infty n}=\sigma(\dots,X_n)$. For $n\in\mathbb Z_+$, define $Y_n=X_{-n}$ and note that $\mathcal H$ is the tail $\sigma$-algebra for $(Y_n)$. Thus the entrance $\sigma$-algebra is really just a tail $\sigma$-algebra, but when we change the arrow of time and run the sequence backwards. It follows that the entrance $\sigma$-algebra of $(X_n)$ satisfies a zero-one law if and only if the tail $\sigma$-algebra of $(Y_n)$ satisfies it. In the simplest case when $(X_n)$ is a sequence of independent random variables, then so is $(Y_n)$ and it follows that Kolmogorov's zero-one law (Fact 1) holds for the entrance $\sigma$-algebra.

We now turn the case of interest. Let $(Z_n)_{n\in\mathbb Z}$ be a MC and, as before, let $Y_n=X_{-n}$ for $n\in\mathbb Z_+$. We refer to $(Z_n)$ as the MC in forward time and to $(Y_n)$ as the MC in reverse time. It is well-known that the sequence $(Y_n)$ is also a MC. In fact, a simple application of Bayes' rule gives the following result.

\begin{prop}\label{prop: inv time} If $(Z_n)_{n_0<n<n_1}$, where $-\infty\le n_0<n_1\le\infty$, is a  MC, then 
for any integers $n_0<n\le s<n_1$
\begin{eqnarray}\label{eq: inv time matrix}
P(Z_n=i_n| Z_{n+1}=i_{n+1}, \dots,Z_s=i_s) &=& P(Z_n=i_n| Z_{n+1}=i_{n+1})\nonumber \\
&=&P(Z_{n+1}=i_{n+1}| Z_{n}=i_{n})\frac{P(Z_{n}=i_{n})}{P(Z_{n+1}=i_{n+1})}.
\end{eqnarray}
\end{prop}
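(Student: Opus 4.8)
The plan is to prove both displayed equalities in \eqref{eq: inv time matrix} by direct manipulation of conditional probabilities, using only the Markov property of $(Z_n)$ and Bayes' rule. Throughout, I would tacitly assume that the conditioning events have positive probability (otherwise the conditional probabilities are undefined and there is nothing to prove); one could note this explicitly at the start.

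First I would establish the first equality, namely that $P(Z_n=i_n\mid Z_{n+1}=i_{n+1},\dots,Z_s=i_s)=P(Z_n=i_n\mid Z_{n+1}=i_{n+1})$. The cleanest route is to observe that this is exactly the Markov property read ``backwards in time.'' Concretely, write the left-hand side as
\begin{equation*}
\frac{P(Z_n=i_n,Z_{n+1}=i_{n+1},\dots,Z_s=i_s)}{P(Z_{n+1}=i_{n+1},\dots,Z_s=i_s)},
\end{equation*}
and then factor the numerator using the (forward) Markov property as
\begin{equation*}
P(Z_n=i_n,Z_{n+1}=i_{n+1})\,P(Z_{n+2}=i_{n+2},\dots,Z_s=i_s\mid Z_{n+1}=i_{n+1}),
\end{equation*}
where the second factor uses that, conditionally on $Z_{n+1}=i_{n+1}$, the ``future'' $(Z_{n+2},\dots,Z_s)$ is independent of the ``past'' $(Z_n)$. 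Factoring the denominator the same way, the conditional-future factor cancels, leaving $P(Z_n=i_n,Z_{n+1}=i_{n+1})/P(Z_{n+1}=i_{n+1})$, which is the claimed right-hand side. I would make sure to state which form of the Markov property I am invoking, since the paper works with chains that merely ``satisfy the Markov property,'' and note that the conditional independence of past and future given the present is equivalent to it.

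The second equality is then just Bayes' rule applied to a single pair: $P(Z_n=i_n\mid Z_{n+1}=i_{n+1})=P(Z_{n+1}=i_{n+1}\mid Z_n=i_n)P(Z_n=i_n)/P(Z_{n+1}=i_{n+1})$, which is immediate from the definition of conditional probability since both sides equal $P(Z_n=i_n,Z_{n+1}=i_{n+1})/P(Z_{n+1}=i_{n+1})$.

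I do not anticipate a serious obstacle here: the statement is essentially bookkeeping. The one point requiring a little care is the positivity of the conditioning events and, relatedly, making the manipulation with the numerator/denominator factorization rigorous — in particular checking that $P(Z_{n+1}=i_{n+1})>0$ whenever $P(Z_{n+1}=i_{n+1},\dots,Z_s=i_s)>0$, so that all intermediate conditional probabilities are well-defined. I would dispatch this in a sentence and keep the emphasis on the role this proposition plays later: it identifies the one-step reverse-time transition probabilities, which is what lets us treat $(Y_n)$ as a (generally nonhomogeneous) MC and apply Theorem \ref{thrm: main} to its tail $\sigma$-algebra, i.e.\ to the entrance $\sigma$-algebra of $(Z_n)$.
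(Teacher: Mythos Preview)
Your argument is correct and matches the paper's own treatment: the paper does not give a detailed proof, merely noting that ``a simple application of Bayes' rule gives the following result,'' which is exactly what you carry out (with the Markov property handling the first equality and Bayes' rule the second). Your remark about assuming positivity of the conditioning events is an appropriate caveat to include.
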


Since $(Y_n)_{n\in\mathbb Z_+}$ is a MC and $\mathcal H$ is its tail $\sigma$-algebra, the zero-one law for MCs, i.e.\ Theorem \ref{thrm: main} and Remark \ref{remark: main}, remains true for $A\in\mathcal H$ so long as we take $n\to-\infty$. In a similar way, we can define a MC indexed by $\mathbb Z_-$ and extend the zero-one law for MCs to that case.

It may be interesting to note that, when the assumptions of Blackwell and Freedman's zero-one law (Fact 2) hold for $(Z_n)$, it does not guarantee that they will hold for $(Y_n)$. This is because one of the assumptions is that the MC is homogeneous, but, as is clear from \eqref{eq: inv time matrix}, when $(Z_n)$ is homogeneous, $(Y_n)$, in general, is not. While this lack of homogeneity, or equivalently of stationary transitions, in reverse time is well-known, it may nevertheless appear to be surprising. As Hunt (1960) \cite{Hunt:1960} points out, ``In view of the symmetry of past and future in the notion of Markoff chain, the lack of such symmetry in defining Markoff chains with stationary transitions must puzzle many a probabilist.''  In fact, this asymmetry is even stronger and gets to the very heart of how MCs are defined. 

There are two standard definitions of a MC. The first is the one that is used in this paper and in many other places including the classic textbook \cite{Kemeny:Snell:Knapp:1976}. This definition assumes that a MC is a sequence of random variables that satisfies the Markov property. The other definition, which is given in many if not most textbook, see e.g.\ \cite{Iosifescu:1980}, is to start with a sequence of transition matrices $(P_n)$, where $P_n$ governs the transitions at time $n$.  In the homogeneous case all of these matrices are equal to one transition matrix $P$.  We refer to the collection of matrices $(P_n)$, or to $P$ in the homogeneous case, as a {\em Markov Chain model} (MCM). This model does not define one MC, but a family of MCs, each determined by an initial distribution. When we fix an initial distribution, we fix the specific Markov chain.

In forward time these two definitions are almost equivalent and, for this reason, not much attention is generally paid to the difference. However, their equivalence breaks down for MCs in reverse time. To see this, consider a MC in forward time that is governed by some MCM. From \eqref{eq: inv time matrix} it is clear that the transition matrices of the MC in reverse time depend on the initial distribution and so, in general, no MCM can exist in reverse time. There is an important exception, which is often used to circumvent this issue, see e.g.\ \cite{Dynkin:1969}. If a MC is both homogeneous and stationary, then a MCM will exist in both forward and backward time, although the transition matrices may be different. They are the same only under additional assumptions, which lead to the so-called reversible MCs.

\section{Existence of Markov Chains on $\mathbb Z_-$ and $\mathbb Z$} \label{sec: MC past}

In the previous section we discussed MCs indexed by $\mathbb Z_-$ and $\mathbb Z$. However, we did not consider the question of whether such MCs exist. This is not a trivial question because such a MC does not have a starting point in time; it starts at ``minus infinity.'' Thus there is no initial distribution. Note that we are not talking about the MC going in a backwards direction, the arrow of time goes, as usual, from left to right.  The question of when such a MC exists was first posed by Kolmogorov in \cite{Kolm1936} and  was formulated as follows: Given a sequence of stochastic matrices $(P_n)_{n\in\mathbb N_-}$, does there exist a MC $(Z_n)_{n\in\mathbb Z_-}$ with these as its transition matrices? We distinguish three cases:
\begin{itemize}
\item Finite constant: the number of states in each state space is finite and equal to some integer $N$. 
\item Finite: the number of states in each state space is finite, but may approach infinity as $n\to-\infty$.
\item Countable: all of the state spaces have a countably infinite number of states.
\end{itemize}
To the best of our knowledge only the finite constant case has been considered in the literature, see \cite{Kolm1936} and \cite{Bla1945}. In this case, Kolmogorov \cite{Kolm1936} showed that a MC always exists and gave a necessary and sufficient condition for uniqueness. However, the proofs in \cite{Kolm1936} are not very detailed. We give detailed proofs, which hold not only in the finite constant case but in the more general finite case. Our proofs do not seem to be exactly what Kolmogorov had in mind, but they are along similar lines. We also consider the countable case, where we show that there are situations when a MC does not exist and give a general sufficient condition for when it does. Throughout, our focus is on the case of MCs indexed by $\mathbb Z_-$. However, all of our results immediately extend to MCs indexed by $\mathbb Z$ since MCs indexed by $\mathbb Z_+$ always exist.

\subsection{Finite Case}

Let $(S_n)_{n\in\mathbb Z_-}$ be a sequence of finite state spaces with $|S_n|=N_n<\infty$ with $\liminf_{n\to-\infty}N_n\le \infty$. For simplicity of notation and without loss of generality, we identify $S_n$ with the set $\{1,2,\dots,N_n\}$. For $n\leq 0$ let
$$
D(n) = \left\{x\in\mathbb R^{N_n}: \sum_{i=1}^{N_n} x(i)=1 \mbox{ and }x(1),\dots,x(N_n)\ge0\right\}
$$
be the probability simplex, i.e.\ the collection of all probability measures on $S_n$. Let $(P_n)_{n\in\mathbb N_-}$ be a sequence of stochastic matrices, with $P_n$ being an $N_{n}\times N_{n+1}$ matrix representing the transition from time $n$ to time $n+1$. For $s<t$, define multistep transition matrices by $P_{st}=\prod_{n=s}^{t-1} P_n$. Note that $P_n=P_{n,n+1}$ and that
\begin{eqnarray}\label{eq: Pst}
P_{st}=P_{su}P_{ut}, \ \ s\le u\le t.
\end{eqnarray}
We identify these matrices with the linear transformations $P_{st}:\mathbb R^{N_s}\mapsto \mathbb R^{N_t}$ given by $P_{st}(m)=mP_{st}$ for $m\in\mathbb R^{N_s}$. Note that we use $P_{st}$ to represent both the matrix and the corresponding linear transformation, but this should not cause any confusion. 

The problem of interest is to determine whether there exists a MC $(Z_n)_{n\in \mathbb Z_-}$ with $Z_n$ taking values in $S_n$, which is governed by the sequence of transition matrices $(P_n)_{n\in\mathbb N_-}$. Since we are starting at ``minus infinity,'' there is no initial distribution in this case. By Kolmogorov's extension theorem, the problem is equivalent to asking if there exists a sequence of vectors $(m_n)_{n\in \mathbb Z_-}$ such that 
\begin{eqnarray}\label{eq: defn m}
m_n\in D(n) \mbox{ and } m_{n+1}= P_{n}(m_n)=m_nP_n,\ \  n\in\mathbb N_-.
\end{eqnarray}
In this case, $P(Z_n=i)=m_n(i)$ and for $k<n$ we have $P(Z_k=i_{k},Z_{k+1}=i_{k+1},\dots,Z_n=i_n)=m_k(i_k)p_{k}(i_k,i_{k+1})\cdots p_{n-1}(i_{n-1},i_{n})$, where for $\ell\in\mathbb N_-$, $p_\ell(i,j)$ denotes the element of matrix $P_{\ell}$ that is located in the $i$th row and $j$th column.

\begin{thrm}\label{thrm: exist finite}
At least one sequence of vectors $(m_n)_{n\in\mathbb Z_-}$ satisfying \eqref{eq: defn m} exists.
\end{thrm}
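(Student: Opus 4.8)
The plan is to construct the sequence $(m_n)_{n\in\mathbb Z_-}$ by a compactness/diagonalization argument. For each starting time $s\in\mathbb N_-$, pick an arbitrary probability vector $\mu^{(s)}\in D(s)$ and push it forward to obtain, for every $n$ with $s\le n\le 0$, the vector $\mu^{(s)}P_{sn}\in D(n)$; for $n<s$ set it (say) equal to an arbitrary element of $D(n)$, or simply ignore those coordinates. This produces, for each $s$, a point in the product space $\prod_{n\le 0} D(n)$. Since each $D(n)\subset\mathbb R^{N_n}$ is compact (it is closed and bounded, as $N_n<\infty$), the countable product $\prod_{n\le 0}D(n)$ is compact and metrizable in the product topology. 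Hence the sequence indexed by $s\to-\infty$ has a convergent subsequence; call its limit $(m_n)_{n\in\mathbb Z_-}$, so $m_n\in D(n)$ for every $n$.

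The next step is to verify that this limit satisfies the consistency relation $m_{n+1}=m_nP_n$ in \eqref{eq: defn m}. Fix $n\in\mathbb N_-$. Along the convergent subsequence, for all $s$ eventually smaller than $n$, the $s$th candidate satisfies $\mu^{(s)}P_{s,n+1}=(\mu^{(s)}P_{sn})P_n$ by the cocycle identity \eqref{eq: Pst}. The map $x\mapsto xP_n$ is a linear, hence continuous, map from $\mathbb R^{N_n}$ to $\mathbb R^{N_{n+1}}$, and convergence in the product topology is coordinatewise. Passing to the limit along the subsequence on both sides gives $m_{n+1}=m_nP_n$. Since $n$ was arbitrary, \eqref{eq: defn m} holds, and by Kolmogorov's extension theorem (as noted in the text) the desired MC $(Z_n)_{n\in\mathbb Z_-}$ exists.

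The one point requiring care — and the main obstacle — is that in the non-constant finite case the ambient dimensions $N_n$ vary with $n$, so one must be a little careful about what "convergence of the candidate sequences" means: the natural move is to work in the fixed product space $\prod_{n\le 0}\mathbb R^{N_n}$ (or $\prod_{n\le 0}D(n)$) rather than in any single $\mathbb R^{N_s}$, and to let $s\to-\infty$ only after restricting attention to the coordinates $n\ge s$, which are the only ones defined by the pushforward. Using the product topology on $\prod_{n\le 0}D(n)$, which is compact by Tychonoff since each factor is a compact metric space, and extracting a subsequence by the usual diagonal argument, sidesteps this cleanly: each coordinate $n$ is eventually "active" for all large enough $|s|$, so the limit is well-defined and the linear relation survives the limit. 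Everything else — compactness of each simplex, continuity of $x\mapsto xP_n$, the semigroup property of the $P_{st}$ — is routine.
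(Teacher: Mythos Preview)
Your argument is correct, and it takes a somewhat different route from the paper's own proof. The paper works coordinate by coordinate: it first shows that the nested images $\Delta(s,t)=P_{st}(D(s))$ are compact, applies Cantor's Intersection Theorem to conclude $\Delta(t)=\bigcap_{s<t}\Delta(s,t)\ne\emptyset$, and then, starting from any $m_0\in\Delta(0)$, builds the sequence \emph{backwards by induction}, at each step using compactness of $\Delta(s,t-1)$ to select a preimage $m_{t-1}\in\Delta(t-1)$ with $P_{t-1}(m_{t-1})=m_t$. You instead embed all coordinates simultaneously in the compact product $\prod_{n\le0}D(n)$, produce a sequence of partially consistent families by pushing forward from time $s$, and extract the entire limit $(m_n)_{n\in\mathbb Z_-}$ in one Tychonoff/diagonal step; the consistency relation then survives the limit by continuity of each $P_n$. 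Your approach is arguably slicker and avoids the separate ``$\Delta(t)\ne\emptyset$ then lift'' two-step structure. The paper's approach, on the other hand, has the payoff that it explicitly identifies $\Delta(t)$ as exactly the set of admissible marginals at time $t$, which is precisely what is needed for the subsequent uniqueness result (Theorem~\ref{thrm: delta is simplex}); your argument gives existence but does not, by itself, characterize the full set of solutions.
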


With essentially the same proof, we get the following more general result, which will be useful for proving an analogous theorem in the countable case.

\begin{thrm}\label{thrm: gen}
Let $(V_n)_{n\in\mathbb Z_-}$ be a sequence of metric spaces and let $(P_n)_{n\in\mathbb N_-}$ be a sequence of continuous transformations with $P_n:V_{n}\mapsto V_{n+1}$. 
Assume that there is a sequence of nonempty {\em compact} sets $(D(n))_{n\in\mathbb Z_-}$ with $D(n)\subset V_n$ such that the image of $D(n)$ under $P_n$ is contained in $D(n+1)$, i.e.\ $P_n(D(n))\subset D(n+1)$. In this case, there exists a sequence of points $(m_n)_{n\in\mathbb Z_-}$ with $m_n\in D(n)$ such that 
$$
m_{n+1} = P_n(m_{n}),\ \  n\in\mathbb N_-.
$$
\end{thrm}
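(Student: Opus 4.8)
The plan is to realize the desired sequence $(m_n)_{n\in\mathbb Z_-}$ as an element of a suitable inverse (projective) limit and to extract it via a compactness argument. For each $k\in\mathbb Z_-$ define the set
$$
C_k = \left\{ m\in D(k) : \exists\ (x_n)_{k\le n\le 0}\text{ with } x_k=m,\ x_n\in D(n),\ x_{n+1}=P_n(x_n)\right\}\subset D(k),
$$
the set of states at time $k$ that can be ``propagated forward'' consistently through time $0$ while staying in the prescribed compact sets. Equivalently, $C_k$ is the image of $D(k)$ under the identity, intersected with the preimages $P_{k,0}^{-1}(D(0))$, etc.; concretely one checks $C_k = D(k)\cap \bigcap_{k\le n<0} (P_{k,n+1})^{-1}(D(n+1))$ using the composed maps $P_{k,n+1}=P_n\circ\cdots\circ P_k$ (continuity of each $P_n$ gives continuity of the composition). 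The first key step is that each $C_k$ is nonempty: this is immediate since $P_n(D(n))\subset D(n+1)$ for every $n$, so starting from \emph{any} point of $D(k)$ and iterating stays inside the $D(n)$'s, hence that point lies in $C_k$. The second key step is that each $C_k$ is compact: it is a closed subset (finite intersection of closed sets, the $D(n)$ being compact hence closed in the metric space $V_n$, and preimages of closed sets under continuous maps being closed) of the compact set $D(k)$.

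The third key step is the nesting/compatibility property: $P_k(C_k)\subset C_{k+1}$, and in fact $P_k(C_k)=C_{k+1}$ is not needed — one-sided inclusion suffices. Indeed if $m\in C_k$ with witnessing sequence $(x_n)_{k\le n\le 0}$, then $(x_n)_{k+1\le n\le 0}$ witnesses $P_k(m)=x_{k+1}\in C_{k+1}$. Now I would build the desired trajectory by a diagonal/backward selection. Pick any $m_0\in C_0$ (nonempty). Having chosen $m_0,\dots,m_{k+1}$ with $m_{j}\in C_{j}$ and $m_{j+1}=P_j(m_j)$ for the relevant $j$'s, I must find $m_k\in C_k$ with $P_k(m_k)=m_{k+1}$. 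This is exactly where compactness does the work and is \textbf{the main obstacle}: a priori $m_{k+1}\in C_{k+1}$ only guarantees the existence of \emph{some} preimage in $D(k)$ of a point that can be continued forward from time $k+1$ — but I need a preimage that itself lies in $C_k$, i.e.\ that is simultaneously a $P_k$-preimage of $m_{k+1}$ and can be continued. The resolution is to not do a naive greedy extension but instead to work with the whole inverse system at once.

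So I would instead finish as follows. Consider the space $\prod_{n\in\mathbb Z_-} D(n)$ with the product topology; by Tychonoff it is compact (each $D(n)$ is compact Hausdorff — metric spaces are Hausdorff). For each $k\le 0$ let
$$
K_k = \left\{ (x_n)_{n\in\mathbb Z_-}\in \prod_{n\in\mathbb Z_-} D(n) : x_{n+1}=P_n(x_n)\text{ for all } k\le n<0 \right\}.
$$
Each $K_k$ is nonempty (take any trajectory consistent on $[k,0]$, which exists by the argument above, and extend arbitrarily outside that block — arbitrarily for $n>0$ is vacuous since indices are $\le 0$, and for $n<k$ just pick any points of $D(n)$), and each $K_k$ is closed in the product (intersection of finitely many conditions, each closed by continuity of $P_n$ and Hausdorffness), hence compact. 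The family $(K_k)_{k\le 0}$ is decreasing: $K_{k}\supset K_{k-1}$. By the finite intersection property for compact sets, $\bigcap_{k\le 0} K_k\neq\emptyset$. Any $(m_n)_{n\in\mathbb Z_-}$ in this intersection satisfies $m_n\in D(n)$ for all $n$ and $m_{n+1}=P_n(m_n)$ for all $n\in\mathbb N_-$, which is exactly the required conclusion. Finally, Theorem \ref{thrm: exist finite} follows by specializing $V_n=\mathbb R^{N_n}$ (a metric space), $P_n$ the linear map $m\mapsto mP_n$ (continuous), and $D(n)$ the probability simplex on $S_n$ (compact, and $P_n(D(n))\subset D(n+1)$ since $P_n$ is stochastic); the vectors $(m_n)$ then satisfy \eqref{eq: defn m}.
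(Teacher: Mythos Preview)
Your proof is correct. The Tychonoff argument is a clean and standard way to show that an inverse limit of nonempty compact Hausdorff spaces is nonempty, and all the details (nonemptiness of $K_k$, closedness via continuity of $P_n$ and Hausdorffness of the factors, the decreasing nesting, and the finite intersection property) are handled properly. Your preliminary discussion of the sets $C_k$ is a digression --- as you yourself observe, $C_k=D(k)$, so nothing is gained there --- but the second half stands on its own.

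The paper's approach is different. Rather than working in the full product, it stays at one time coordinate and studies the nested images $\Delta(s,t)=P_{st}(D(s))\subset D(t)$; Cantor's intersection theorem gives $\Delta(t)=\bigcap_{s<t}\Delta(s,t)\neq\emptyset$, and then a backward induction builds the sequence: given $m_t\in\Delta(t)$, one picks preimages $m_{t-1}(s)\in\Delta(s,t-1)$ for each $s$, extracts a convergent subsequence by compactness, and checks that the limit $m_{t-1}$ lies in $\Delta(t-1)$ and maps to $m_t$ by continuity. Your route is shorter and conceptually uniform, with the compactness hit applied once to the whole product. The paper's route is more hands-on but has a payoff you do not get for free: it explicitly identifies $\Delta(t)$ as the exact set of feasible marginals at time $t$, which is precisely what is exploited downstream in Theorem~\ref{thrm: delta is simplex} and its countable analogue. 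If you adopt your argument in place of the paper's, you would still need to supply that characterization separately.
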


Before giving our next results, we set up some notation. For $s<t\le 0$, define the set $\Delta(s,t)\subset D(t)$ to be the image of $D(s)$ under the linear transformation $P_{st}$. Note that, by \eqref{eq: Pst}, for any $s<t\le0$
\begin{eqnarray}\label{eq: subsetseq}
\dots\subset \Delta(s,t)\subset\dots\subset \Delta(t-1,t)\subset D(t).
\end{eqnarray}
The set $D(s)$ is a simplex with $N_s$ vertices. More specifically, it is the convex hull of $e_1^{(N_s)}, e_2^{(N_s)}, \dots, e_{N_s}^{(N_s)}$, the rows of the $N_s\times N_s$-dimensional identity matrix. Similarly, $\Delta(s,t)$ is the convex hull of $a_1,a_2,\dots,a_s$, where $a_k=P_{st}(e_k^{(N_s)})$, $k=1,2,\dots,N_s$. The meanings of $\Delta(s,t)$ and $a_k$ are as follows. If we start running MC $(Z_n)_{s\le n\le t}$ at time $s$ with transition matrices $(P_{n})_{s\le n\le t-1}$, then any distribution in $D(s)$ can serve as the distribution of $Z_s$, i.e.\ as the initial distribution. However, at time $t$, the only possible distributions for $Z_t$ are those in $\Delta(s,t)$. In particular $a_k\in\Delta(s,t)$ corresponds to the distribution that we get at time $t$ if the initial distribution at time $s$ was $e_k^{(N_s)}$. If we let the time $s$ at which we start the MC approach $-\infty$, then the possible distributions of $Z_t$ will be contained in $\Delta(t):=\bigcap_{s< t}  \Delta(s,t)$. In fact, in the proof of Theorem \ref{thrm: exist finite} we show that $\Delta(t)$ is exactly the set of all possible distributions of $Z_t$. It follows that such a MC on $\mathbb Z_-$ exists if and only of $\Delta(0)\ne\emptyset$. We now characterize when the distribution at time $t$ is unique under an additional assumption. This extends the second theorem in \cite{Kolm1936} and Corollary 2 in \cite{Bla1945}, which were both formulated for the finite constant case.

\begin{thrm}\label{thrm: delta is simplex}
Let $N=\liminf_{t\to-\infty}N_t $ and assume that $N<\infty$.\\
a) For any $t\le0$, $\Delta(t)$ is a simplex with at most $N$ vertices.  More specifically, there exists an $N\times N_t$-dimensional stochastic matrix $P_t^*$ and a sequence $(s_n)\subset\mathbb Z_-$ with $s_n\to-\infty$ and $P_{s_nt}\to P^*_t$ such that $\Delta(t)$ is the convex hull of $\{a_1,a_2,\dots,a_{N}\}$, where $a_i = P_t^*(e_i^{(N)})$.\\
b) There is a unique $m_t\in \Delta(t)$ at time $t$ if and only if for any matrix $P_t^*$, which satisfies $\lim_{n\to\infty}P_{s_nt}=P_t^*$ for some subsequence $(s_n)$, all row of $P_t^*$ are equal to $m_t$.\\
c) Assume, in addition, that $\lim_{t\to-\infty}N_t=N<\infty$ exists. In this case, there is a unique distribution at time $t$ if and only if there exists a matrix $P_t^*$ with $\lim_{s\to-\infty}P_{st}=P_t^*$ and all row of $P_t^*$ are identical and equal to this unique distribution.
\end{thrm}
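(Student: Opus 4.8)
The entire theorem reduces to the ``more specifically'' clause of (a): once we know that for a suitable sequence $s_n\to-\infty$ with $N_{s_n}=N$ we have $P_{s_nt}\to P^*_t$ and $\Delta(t)=\mathrm{conv}\{a_1,\dots,a_N\}$ with $a_i=P^*_t(e_i^{(N)})$, parts (b) and (c) are soft consequences. The one lemma I would isolate is: \emph{if $P^*_t=\lim_k P_{s_kt}$ for any $s_k\to-\infty$, then every row of $P^*_t$ lies in $\Delta(t)$.} Throughout I use that $\Delta(t)=\bigcap_{s<t}\Delta(s,t)$ is a nonempty, compact, convex set, being a nested intersection (cf.\ \eqref{eq: subsetseq}) of the nonempty compact convex sets $\Delta(s,t)=P_{st}(D(s))$.

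To prove (a), first use $N=\liminf_{s\to-\infty}N_s<\infty$ to pick $s_n\to-\infty$ with $N_{s_n}=N$; then each $P_{s_nt}$ is an $N\times N_t$ stochastic matrix, and the set of these is compact (it is a product of $N$ copies of the simplex $D(t)$), so after thinning $P_{s_nt}\to P^*_t$ for some $N\times N_t$ stochastic $P^*_t$; write $a_i=P^*_t(e_i^{(N)})$ for its rows. For $\mathrm{conv}\{a_1,\dots,a_N\}\subseteq\Delta(t)$: fix $s<t$ and take $n$ so large that $s_n<s$; by the semigroup law \eqref{eq: Pst}, $e_i^{(N)}P_{s_nt}=\bigl(e_i^{(N)}P_{s_ns}\bigr)P_{st}\in P_{st}(D(s))=\Delta(s,t)$, since $e_i^{(N)}P_{s_ns}\in D(s)$; as $\Delta(s,t)$ is closed, $a_i=\lim_n e_i^{(N)}P_{s_nt}\in\Delta(s,t)$, and since $s<t$ was arbitrary, $a_i\in\Delta(t)$; convexity of $\Delta(t)$ finishes the inclusion. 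For the reverse inclusion: any $m\in\Delta(t)$ lies in $\Delta(s_n,t)=P_{s_nt}(D(s_n))$ for every $n$, so $m=\lambda_nP_{s_nt}$ with $\lambda_n\in D(s_n)$; since $N_{s_n}=N$, all $\lambda_n$ lie in one compact simplex, so along a further subsequence $\lambda_n\to\lambda$ and $m=\lim_n\lambda_nP_{s_nt}=\lambda P^*_t=\sum_i\lambda(i)a_i$. Hence $\Delta(t)=\mathrm{conv}\{a_1,\dots,a_N\}$, which is (a). (Here ``simplex'' is to be read in the weak sense ``convex hull of at most $N$ points''; $\Delta(t)$ need not be a geometric simplex.)

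Parts (b) and (c) now follow. There is a unique distribution at time $t$ iff $\Delta(t)=\{m_t\}$. If $\Delta(t)=\{m_t\}$, then for \emph{any} $P^*_t$ with $P_{s_kt}\to P^*_t$ the first inclusion argument above (which used only $s_k\to-\infty$) shows each row of $P^*_t$ lies in $\bigcap_{s<t}\Delta(s,t)=\Delta(t)=\{m_t\}$, so all rows of $P^*_t$ equal $m_t$; conversely, if every such limit has all rows equal to $m_t$, then the particular $P^*_t$ from (a) does, whence $\Delta(t)=\mathrm{conv}\{a_1,\dots,a_N\}=\{m_t\}$. This is (b). For (c), the hypothesis $\lim_{s\to-\infty}N_s=N<\infty$ makes $(P_{st})$, for all sufficiently negative $s$, a sequence in the single compact space of $N\times N_t$ stochastic matrices; if the distribution at $t$ is unique, (b) says every subsequential limit of $(P_{st})$ equals the one matrix all of whose rows are $m_t$, and a sequence in a compact metric space all of whose subsequential limits coincide converges to that common value, so $P_{st}\to P^*_t$. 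The converse direction of (c) is immediate from (b).

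The step I expect to be the main obstacle is the lemma, equivalently the forward implication in (b): one must combine the semigroup identity \eqref{eq: Pst} with the closedness of each $\Delta(s,t)$ to drive the limiting rows into the \emph{intersection} $\Delta(t)$, not merely into some individual $\Delta(s,t)$, while simultaneously managing two nested subsequence extractions (one for $P_{s_nt}\to P^*_t$, another — depending on the point $m$ — for $\lambda_n\to\lambda$). The assumption $\liminf_{s\to-\infty}N_s=N<\infty$ is used precisely to guarantee that the sets $\Delta(s_n,t)$ are images of one fixed simplex $D(s_n)$ and that the barycentric coordinate vectors $\lambda_n$ live in a single compact set; the argument does not close without it.
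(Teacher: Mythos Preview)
Your argument is correct and follows essentially the same route as the paper's proof: extract a convergent subsequence of $(P_{s_nt})$ by compactness of stochastic matrices, show each row of the limit lies in every $\Delta(s,t)$ via \eqref{eq: Pst} and closedness, and for the reverse inclusion write $m\in\Delta(t)$ as $\lambda_nP_{s_nt}$ and pass to a subsequential limit of the barycentric coordinates. The only cosmetic difference is that the paper first works with an arbitrary subsequential limit $M\ge N$ of $(N_s)$ and only at the end specializes to $M=N$, whereas you pick $N_{s_n}=N$ from the outset; your choice is cleaner and loses nothing.
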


Note that, in a), if we consider different subsequences, we may get different matrices $P_t^*$. However, their ranges will, necessarily, be the same. This is true even if all of the state spaces have the same number of elements.  A simple example is when $P_n=\begin{pmatrix}
0 &\ \ &1 \\
1& \ \ &0
\end{pmatrix}$ for each $n<0$. In b), if $\lim_{t\to-\infty}N_t$ does not exist, then $\lim_{s\to-\infty}P_{st}=P_t^*$ will not exist. An example is when $P_{2n}=\begin{pmatrix}
.5 &\ \ &.5
\end{pmatrix}$ and  $P_{2n+1}=\begin{pmatrix}
1 \\
1
\end{pmatrix}$ for $n<0$. In this case if $t$ is even, then the possible limits are $P_t^*=\begin{pmatrix}
.5 &\ \ & .5 \\
.5 &\ \ &.5
\end{pmatrix}$ and $P_t^*=\begin{pmatrix} .5 &\ \ & .5 \end{pmatrix}$, and if $t$ if odd then the possible limits are $P_t^*=\begin{pmatrix}
1 \\
1
\end{pmatrix}$ and $P_t^*=\begin{pmatrix}1\end{pmatrix}$.

\subsection{Countable Case}

We now turn to the countable case, where $|S_n|=\infty$ for each $n\le0$. For simplicity of notation and without loss of generality, we assume that each $S_n=\mathbb N=\{1,2,\dots\}$. As usual, let $\ell^1$ be the space of absolutely summable sequences $m=(m(1),m(2),\dots)$ of real numbers equipped with the norm
$$
\|m\|=\sum_{i=1}^\infty |m(i)|.
$$
Let
$$
D=\left\{m\in\ell_1: \|m\|=1 \mbox{ and } m(i)\ge0 \mbox{ for each }i\ge0\right\}
$$
be the set of probability measures in $\ell^1$. Let $P$ be an infinite stochastic matrix, see e.g.\ \cite{Kemeny:Snell:Knapp:1976} for details about such matrices. For $1\le i,j<\infty$, let $p(i,j)$ be the element of $P$ in the $i$th row and $j$th column, let $p(i,\star)$ be the $i$th row of $P$, and let $p(\star,j)$ be the $j$th column of $P$. The fact that $P$ is stochastic means that all of its rows belong to $D$. $P$ corresponds to the linear transformation $P:\ell^1\mapsto\ell^1$ such that for any $m\in\ell^1$, $m'=P(m)$ is the vector with
$$
m'(j) = \sum_{i=1}^\infty m(i) p(i,j).
$$
In this case
\begin{eqnarray}\label{eq: bounded trans}
\|m'\|=\sum_{j=1}^\infty |m'(j)| \le \sum_{i=1}^\infty |m(i)| \sum_{j=1}^\infty p(i,j) = \sum_{i=1}^\infty |m(i)|  = \|m\|,
\end{eqnarray}
which implies that $P$ is bounded, and thus that it is continuous, see e.g.\ Proposition 5.2 in \cite{Folland:1999}. Note further that, if $m(i)\ge0$ for each $i$, then by arguments similar to those in \eqref{eq: bounded trans}, we have $\|m'\| = \|m\|$.

Let $(P_n)_{n\in\mathbb N_-}$ be a sequence of infinite stochastic matrices. For $s< t\le0$ let $P_{st}=P_{t-1}\circ P_{t-2}\circ\cdots\circ P_{s}$, where $\circ$ denotes the composition operator. Each $P_{st}$ corresponds to an infinite stochastic matrix and it is easily checked that 
\begin{eqnarray}\label{eq: comp stoch matrix}
p_{st}(i,k) = \sum_{j=1}^\infty p_{su}(i,j)p_{ut}(j,k), \ \ \ s\le u\le t.
\end{eqnarray}
For $s<t\le0$, let $\Delta_n(s,t)\subset D$ be the image of $D$ under $P_{st}$.

As in the finite case, to show that a MC $(Z_n)_{n\in\mathbb Z_-}$ with transition matrices $(P_n)_{n\in\mathbb N_-}$ exists, it suffices to show that there exists a sequence $(m_n)_{n\in\mathbb Z_-}\subset D$ with
\begin{eqnarray}\label{eq: defn m count}
m_{n+1} = P_n(m_{n}) , \ \ \ n\in\mathbb N_-. 
\end{eqnarray}
We can use Theorem \ref{thrm: gen} to find when such a sequence exists. However, we must be careful. Unlike in the finite case, here the set $D$ is not compact. This follows from Prohorov's Theorem (see e.g.\ Theorem 25.10 in \cite{Billingsley:1995}), which says that subsets of $D$ are compact if and only if they are closed and tight. A statement of this result in the context of the larger space $\ell^1$ can be found in Theorem 44.2 of \cite{Treves:1967}. For this reason, in order to define the appropriate compact sets, we require an additional assumption. First, we give a definition.

\begin{defn}
Fix $H\subset D$. If for any $\varepsilon>0$ there exists an $N_\varepsilon>0$ such that for any $m\in H$
$$
\sum_{k=1}^{N_\varepsilon} m(k)\ge 1-\varepsilon,
$$
we say that $H$ is tight. If $H$ corresponds to the rows of an infinite stochastic matrix $P$, then we say that $P$ is tight. 
\end{defn}

With this we can state our assumption. We call this Condition P because it is a version of the condition in Prohorov's Theorem.\\

\noindent\textbf{Condition P.} {\it There exists an infinite set $V\subset \mathbb N_-$ such that for each $n\in V$, the infinite stochastic matrix $P_{n-1}$ is tight, i.e.\ for any $n\in V$ and $\varepsilon>0$ there exists an $N_\varepsilon(n)>0$ such that for any $i\in\mathbb N$
$$
\sum_{k=1}^{N_\varepsilon(n)} p_{n-1}(i,k)\ge 1-\varepsilon.
$$}

Condition P means that infinitely many of the $P_{n}$s are tight. Note that, in the condition, we allow $\liminf_{n\to-\infty,n\in V}N_\varepsilon(n)\le\infty$. While the condition is stated in terms of $P_{n-1}$, it implies that, for any $n\in V$ and any $k<n$, $\Delta(k,n)$ is a compact set, see Lemma \ref{lemma: delta compact} in Section \ref{sec: proofs} below. We now give our main result for the countable case.

\begin{thrm}\label{thrm: exist countable}
If Condition P holds, then at least one sequence $(m_n)_{n\in\mathbb Z_-}\subset D$ satisfying \eqref{eq: defn m count} exists.
\end{thrm}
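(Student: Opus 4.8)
The plan is to deduce this from Theorem \ref{thrm: gen}. Take each $V_n=\ell^1$ for $n\in\mathbb Z_-$; every $P_n$ is a continuous transformation of $\ell^1$ by the bound \eqref{eq: bounded trans}. For the compact sets required by Theorem \ref{thrm: gen} I would use the ``limiting'' sets $\Delta(t):=\bigcap_{s<t}\Delta(s,t)\subset D$, just as in the finite case. The proof then reduces to two claims: (i) each $\Delta(t)$ is nonempty and compact; and (ii) $P_n(\Delta(n))\subset\Delta(n+1)$ for every $n\in\mathbb N_-$. Granting these, Theorem \ref{thrm: gen} yields a sequence $(m_n)_{n\in\mathbb Z_-}$ with $m_n\in\Delta(n)\subset D$ and $m_{n+1}=P_n(m_n)$ for all $n\in\mathbb N_-$, which is exactly \eqref{eq: defn m count}.

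The first thing to establish is the nesting property: since $P_{su}(D)\subset D$ and $P_{st}=P_{ut}\circ P_{su}$ for $s\le u\le t$ (cf.\ \eqref{eq: comp stoch matrix}), we have $\Delta(s,t)=P_{ut}(P_{su}(D))\subset P_{ut}(D)=\Delta(u,t)$, so $\{\Delta(s,t)\}_{s<t}$ decreases as $s\to-\infty$; this is the countable analog of \eqref{eq: subsetseq}. For (i), fix $t\le0$. As $V$ is an infinite subset of $\mathbb Z_-$ it is unbounded below, so I can choose $n_0\in V$ with $n_0\le t$. For each integer $s<n_0$, Lemma \ref{lemma: delta compact} gives that $\Delta(s,n_0)$ is compact, hence $\Delta(s,t)=P_{n_0t}(\Delta(s,n_0))$, being the continuous image of a compact set in a metric space, is compact; it is also nonempty as the image of $D$. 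By the nesting property $\Delta(t)=\bigcap_{s<t}\Delta(s,t)=\bigcap_{s<n_0}\Delta(s,t)$, a nested intersection of nonempty compact sets, hence itself nonempty and compact. For (ii), fix $n\in\mathbb N_-$; for each $s<n$ we have $P_n(\Delta(s,n))=P_n(P_{sn}(D))=P_{s,n+1}(D)=\Delta(s,n+1)$, so $P_n(\Delta(n))\subset\bigcap_{s<n}\Delta(s,n+1)$, and the nesting property gives $\bigcap_{s<n}\Delta(s,n+1)=\bigcap_{s\le n}\Delta(s,n+1)=\Delta(n+1)$. This proves (ii), and Theorem \ref{thrm: gen} completes the argument.

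The only substantial step is the compactness in (i), i.e.\ the content of Lemma \ref{lemma: delta compact}, which is where Condition P and Prohorov's theorem enter. In contrast to the finite case, $D$ is not compact in $\ell^1$, so without an extra hypothesis the sets $\Delta(s,t)$ need not be compact; Condition P is exactly what makes infinitely many of the transition matrices $P_{n-1}$ tight, and therefore makes their images --- and a fortiori the smaller sets $\Delta(s,t)$ for $s$ far enough in the past --- tight and closed, hence compact. Everything else (nesting, the finite intersection property, forward-invariance of the $\Delta(n)$, and the reduction to Theorem \ref{thrm: gen}) is routine bookkeeping.
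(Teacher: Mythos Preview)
Your proof is correct and, like the paper's, deduces the result from Theorem \ref{thrm: gen} with compactness supplied by Lemma \ref{lemma: delta compact}. The difference lies in the choice of compact sets. The paper re-indexes along the subset $V$: listing $V=\{t_1>t_2>\cdots\}$, it takes $D(n)=\Delta(t_{n+1},t_n)$ and uses the multi-step maps $P_{t_{n+1}t_n}$; Theorem \ref{thrm: gen} then produces $(m_{t_n})$, after which the intermediate $m_t$ for $t_{n+1}<t<t_n$ are filled in by hand. You instead take $D(n)=\Delta(n)$ for every $n$, pushing the compactness of $\Delta(s,n_0)$ (for $n_0\in V$, $n_0\le t$) forward through the continuous map $P_{n_0t}$ to get compactness of $\Delta(s,t)$ for all $t$, and then of $\Delta(t)$ via Cantor's intersection theorem. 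Your route is arguably cleaner: it yields the full sequence $(m_n)$ directly with no re-indexing or filling-in, and it parallels the finite-case argument (proof of Theorem \ref{thrm: exist finite}) where $\Delta(t)$ is also the central object. The paper's route avoids ever proving $\Delta(t)$ compact for $t\notin V$, at the cost of the extra bookkeeping. Both are short once Lemma \ref{lemma: delta compact} is available.
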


Note that this theorem shows only that Condition P is sufficient. In fact, it is not necessary. In Section \ref{sec: proofs} we give two examples where Condition P does not hold. The first is the example of a symmetric random walk on $\mathbb Z$, for which we show that a solution to \eqref{eq: defn m count} does not exist. The second is the situation where each $P_n$ is onto, in which case we show that a solution to \eqref{eq: defn m count} always exists. 

Next, we turn to the question of when the solution to \eqref{eq: defn m count} is unique. To state this result, we need a stronger condition, which is a uniform version of Condition P.\\

\noindent\textbf{Condition U.} {\it There exists an infinite set $V\subset \mathbb N_-$ such that for any $\varepsilon>0$ there exists an $N_\varepsilon$ such that for any $n\in V$ and $i\in\mathbb N$
$$
\sum_{k=1}^{N_\varepsilon} p_{n-1}(i,k)\ge 1-\varepsilon.
$$}

For $t\le0$, let $\Delta(t)=\bigcap_{s<t} \Delta(s,t)$. By arguments similar to those in the proof of Theorem \ref{thrm: exist finite}, $\Delta(t)$ is exactly the set of all vectors that can serve as solutions at time $t$, i.e.\ it is the set of all $m\in D$ with the property that there exists a sequence $(m_n)_{n\in\mathbb Z_-}\subset D$ such that $m_t=m$ and \eqref{eq: defn m count} holds. We now give our uniqueness results.

\begin{thrm}\label{thrm: uniq count}
Assume that Condition U holds. \\
a) The set $\Delta(t)$ is the convex hull of an at most countable collection of vectors in $D$. \\
b) There is a unique distribution at time $t$ if and only if there exists and infinite stochastic matrix $P_t^*$ with $\lim_{s\to-\infty}P_{st}=P_t^*$ such that all row of $P_t^*$ are identical and equal to this unique distribution.
\end{thrm}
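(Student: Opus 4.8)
The plan is to run the argument of Theorem~\ref{thrm: delta is simplex} in $\ell^1$, with the compact truncation sets $\Delta(n-1,n)=P_{n-1}(D)$, $n\in V$ (compact by Lemma~\ref{lemma: delta compact}), playing the role of the finite simplices. Fix $t\le0$ and a reference index $n_0\in V$ with $n_0<t$, put $L:=\Delta(n_0-1,n_0)$, which is compact and tight, and $K:=P_{n_0t}(L)$, which is then a compact and tight subset of $D$. Composition \eqref{eq: comp stoch matrix} gives $\Delta(s,t)=P_{n_0t}(\Delta(s,n_0))\subset K$ for every $s<n_0$, and since $\Delta(s,n_0)$ decreases as $s$ decreases, $\Delta(t)=\bigcap_{s<n_0}\Delta(s,t)$; moreover every row $e_iP_{st}$ of every $P_{st}$ with $s<n_0$ lies in the fixed compact tight set $K$. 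A diagonal extraction over the countably many entries gives $s_k\downarrow-\infty$ in $V\cap(-\infty,n_0)$ with $p_{s_kt}(i,j)\to p^*_t(i,j)$ for all $i,j$; uniform tightness of the rows (they lie in $K$) forces $P^*_t=(p^*_t(i,j))$ to be a stochastic matrix, each row of $P_{s_kt}$ to converge to the corresponding row of $P^*_t$ in $\ell^1$, and hence $P_{s_kt}(m)\to P^*_t(m)$ in $\ell^1$ for every $m\in D$ (dominated convergence on $\sum_i m(i)\|e_iP_{s_kt}-e_iP^*_t\|$).

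Part~a) amounts to showing $\Delta(t)=P^*_t(D)$, which displays $\Delta(t)$ as the convex hull of the (at most countably many) rows of $P^*_t$. For ``$\supset$'': each row $e_iP^*_t=\lim_k e_iP_{s_kt}$ belongs to $\Delta(s,t)$ for every $s<n_0$ because $e_iP_{s_kt}\in\Delta(s_k,t)\subset\Delta(s,t)$ once $s_k\le s$ and $\Delta(s,t)$ is closed; hence every row of $P^*_t$, and therefore all of $P^*_t(D)$, lies in the closed convex set $\Delta(t)$. For ``$\subset$'': given $m\in\Delta(t)$, repeatedly pull back through the compact truncation sets --- from $m=P_{n_0t}(\nu_{n_0})$ with $\nu_{n_0}\in\Delta(n_0):=\bigcap_{s<n_0}\Delta(s,n_0)$, obtained as a limit point in $L$ and placed in $\Delta(n_0)$ by the cofinality of $(s_k)$, through $\nu_{n_0}=P_{n_1n_0}(\nu_{n_1})$, $\nu_{n_1}\in\Delta(n_1)$, and so on along a subsequence $n_0>n_1>n_2>\dots$ of $(s_k)$ --- producing $m=P_{n_jt}(\nu_{n_j})$ with $\nu_{n_j}\in\Delta(n_j)\subset P_{n_j-1}(D)$. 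Here Condition~U is essential: since each $n_j\in V$, the $P_{n_j-1}$ are tight with one common $N_\varepsilon$, so $\{\nu_{n_j}\}$ is uniformly tight, a coordinatewise limit $\nu$ lies in $D$, and $\nu_{n_j}\to\nu$ in $\ell^1$; then $\|P_{n_jt}(\nu_{n_j})-P^*_t(\nu)\|\le\|\nu_{n_j}-\nu\|+\|P_{n_jt}(\nu)-P^*_t(\nu)\|\to0$, so $m=P^*_t(\nu)\in P^*_t(D)$.

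For part~b), recall that $\Delta(t)$ is exactly the set of distributions attainable at time $t$. If $\lim_{s\to-\infty}P_{st}=P^*_t$ with all rows of $P^*_t$ equal to $m$, then the subsequential limit from part~a) must be this $P^*_t$, so $\Delta(t)=P^*_t(D)=\{m\}$ and $m$ is the unique attainable distribution. Conversely, if $m$ is the unique distribution then $\Delta(t)=\{m\}$ while the $\Delta(s,t)$, $s<n_0$, are nonempty, decreasing and contained in the compact $K$, so $\Delta(s,t)\subset\{x:\|x-m\|<\varepsilon\}$ for all sufficiently negative $s$; since every row $e_iP_{st}\in\Delta(s,t)$ this gives $\sup_i\|e_iP_{st}-m\|\to0$, whence $p_{st}(i,j)\to m(j)$ for all $i,j$ and $\lim_{s\to-\infty}P_{st}=P^*_t$ exists with every row equal to the stochastic vector $m$.

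The main obstacle is the inclusion ``$\subset$'' in part~a). In the finite case one just pulls $m$ back along $P_{s_kt}$ and uses compactness of the simplices $D(s_k)$; here $D$ is not compact, so the pull-back must be routed through the compact truncation sets $\Delta(n_j-1,n_j)$, and the resulting ``initial'' vectors $\nu_{n_j}$ need not have a limit in $D$ unless $\{\nu_{n_j}\}$ is tight --- which is exactly why the \emph{uniform} Prohorov-type condition (Condition~U) rather than Condition~P is required. The remaining points --- that no mass escapes so $P^*_t$ is genuinely stochastic, and that the various limit interchanges are valid --- are routine but must be checked.
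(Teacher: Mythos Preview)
Your proposal is correct and follows essentially the same route as the paper: extract a subsequential limit $P_t^*$ via a diagonal argument over the (uniformly tight under Condition~U) rows of $P_{st}$, identify $\Delta(t)$ with the convex hull $P_t^*(D)$ of the rows, and for the ``$\subset$'' inclusion pull $m\in\Delta(t)$ back to a uniformly tight family and pass to the limit (the paper packages the extraction and the interchange $P_{s_kt}(m_{s_k})\to P_t^*(m)$ into Lemma~\ref{lemma: conv}, and the tightness of the pull-backs into Lemma~\ref{lemma: delta compact}, while you carry them out by hand). Your argument for the ``only if'' direction of part~b) is in fact slightly cleaner than the paper's: rather than comparing subsequential limits as in Theorem~\ref{thrm: delta is simplex}(c), you observe directly that the nested compacts $\Delta(s,t)\subset K$ with intersection $\{m\}$ must eventually lie in any $\varepsilon$-ball around $m$, which immediately gives the full limit $\lim_{s\to-\infty}P_{st}=P_t^*$ with constant rows.
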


This result extends Theorem \ref{thrm: delta is simplex} and the second theorem in \cite{Kolm1936} to the countable case.

\section{Proofs}\label{sec: proofs all}

\subsection{Proof of Theorem \ref{thrm: main} using L\'evy's Theorem} \label{sec: tech proof}

In this section we give a proof of Theorem \ref{thrm: main} using L\'evy's `Upward' Theorem. For a proof of Kolmogorov's zero-one law using a similar approach see, e.g.,\ Section 14.3 in \cite{Williams:1991}. L\'evy's `Upward' Theorem (see 14.2 in \cite{Williams:1991}) states that for any random variable $X$ with a finite expectation
\begin{eqnarray}\label{eq: Levy upward}
\lim_{n\to\infty} \rE[X|\mathcal F_{0n}] = \rE[X|\mathcal F_{\infty}] \ \mbox{a.s.}
\end{eqnarray}
where 
$
 \mathcal F_\infty = \sigma\left(\bigcup_{n\ge0} \mathcal F_{0n}\right)
$.

\begin{lemma}
We have $\mathcal T\subset \mathcal F_\infty$.
\end{lemma}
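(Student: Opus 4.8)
The plan is to prove the inclusion via the chain $\mathcal T\subset \mathcal F_{0\infty}=\mathcal F_\infty$. The first inclusion is immediate from the definition of the tail $\sigma$-algebra: since $\mathcal T=\bigcap_{n\ge0}\mathcal F_{n\infty}$, taking the term $n=0$ in the intersection gives $\mathcal T\subset\mathcal F_{0\infty}=\sigma(X_0,X_1,\dots)$. So the real content is to identify $\mathcal F_{0\infty}$ with $\mathcal F_\infty=\sigma\big(\bigcup_{n\ge0}\mathcal F_{0n}\big)$.

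For that equality I would argue both inclusions. On the one hand, each $\mathcal F_{0n}=\sigma(X_0,\dots,X_n)\subset\sigma(X_0,X_1,\dots)=\mathcal F_{0\infty}$, so $\bigcup_{n\ge0}\mathcal F_{0n}\subset\mathcal F_{0\infty}$ and hence $\mathcal F_\infty=\sigma\big(\bigcup_{n\ge0}\mathcal F_{0n}\big)\subset\mathcal F_{0\infty}$. On the other hand, $\mathcal F_{0\infty}$ is generated by the finite-dimensional cylinder sets, i.e.\ sets of the form $\{(X_0,\dots,X_k)\in B\}$ with $k\ge0$ and $B$ measurable in the appropriate product space; this is the standard fact that a $\sigma$-algebra generated by a countable family of random variables is generated by its finite-dimensional cylinders (provable by a $\pi$-$\lambda$ or monotone class argument). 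Each such cylinder lies in $\mathcal F_{0k}\subset\bigcup_{n\ge0}\mathcal F_{0n}$, so the generating class of $\mathcal F_{0\infty}$ is contained in $\bigcup_{n\ge0}\mathcal F_{0n}$, whence $\mathcal F_{0\infty}\subset\sigma\big(\bigcup_{n\ge0}\mathcal F_{0n}\big)=\mathcal F_\infty$. Combining the two inclusions yields $\mathcal F_{0\infty}=\mathcal F_\infty$, and therefore $\mathcal T\subset\mathcal F_{0\infty}=\mathcal F_\infty$.

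There is essentially no obstacle in this argument; it is purely a bookkeeping lemma. The only place that merits a word of care is the reduction of $\mathcal F_{0\infty}$ to its finite-dimensional cylinders, which is entirely standard (it also uses that the $\mathcal F_{0n}$ are increasing in $n$, so their union is an algebra), and one can simply cite a measure-theoretic probability reference for it rather than reproving it. I would keep the proof to a couple of lines, since its sole purpose is to set up the application of L\'evy's `Upward' Theorem in \eqref{eq: Levy upward}, where conditioning on $\mathcal F_\infty$ will be used to handle events in $\mathcal T$.
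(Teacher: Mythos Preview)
Your proof is correct and follows essentially the same route as the paper's. The paper argues directly that each $Z_k$ is $\mathcal F_\infty$-measurable (since $Z_k$ is $\mathcal F_{0k}$-measurable and $\mathcal F_{0k}\subset\mathcal F_\infty$), hence $\mathcal F_{n\infty}\subset\mathcal F_\infty$ for every $n$, which gives $\mathcal T\subset\mathcal F_\infty$; you specialize to $n=0$ and additionally establish the reverse inclusion $\mathcal F_\infty\subset\mathcal F_{0\infty}$, which is true but not needed for the stated conclusion.
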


\begin{proof}
Since $Z_k$ is measurable $\mathcal F_\infty$ for each $k$, it follows that $\mathcal F_{n\infty}\subset\mathcal F_\infty$ for each $n$. Thus, $\mathcal T=\bigcap_{n\ge0}\mathcal F_{n\infty}\subset\mathcal F_\infty$.
\end{proof}

For any $A\in\mathcal T$ we have
$$
\lim_{n\to\infty} P(A|Z_n) = \lim_{n\to\infty} P(A|\mathcal F_{0n}) = P(A|\mathcal F_{\infty}) = I_A  \ \mbox{a.s.}
$$
where the first equality follows by the Markov property, which is applicable since $A\in\mathcal T\subset\mathcal F_{(n+1)\infty}$, the second by \eqref{eq: Levy upward} applied to the random variable $X=I_A$, and the third by the fact that $A\in\mathcal T\subset\mathcal F_{\infty}$. From here, part $a$) of Theorem \ref{thrm: main} and part $a'$) of Remark \ref{remark: main} follow immediately from the following lemma. After that, the remaining parts easily follow. 

\begin{lemma}
If $0<q<1$, then
$$
\{Z_n\in S_n(q,1)\}\to A  \ \mbox{a.s.}
$$
\end{lemma}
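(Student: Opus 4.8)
The plan is to reduce the statement to the almost sure convergence $P(A\mid Z_n)\to I_A$ just established, combined with the bounded convergence theorem. First I would introduce the notation $g_n(i)=P(A\mid Z_n=i)$, with the convention $g_n(i)=0$ on the (null) set of states $i$ with $P(Z_n=i)=0$, so that $P(A\mid Z_n)=g_n(Z_n)$ holds $P$-a.s.\ for each $n$. Since $P(A\mid Z_n=i)\le1$ for every $i$, the set $S_n(q,1)=\{i: q\le P(A\mid Z_n=i)\le1\}$ is simply $\{i: g_n(i)\ge q\}$, and therefore $\{Z_n\in S_n(q,1)\}=\{g_n(Z_n)\ge q\}$ up to a $P$-null set. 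Write $E_n=\{g_n(Z_n)\ge q\}$; the goal is to show $P(E_n\bigtriangleup A)\to0$, which is exactly the assertion $\{Z_n\in S_n(q,1)\}\to A$ a.s.\ in the paper's notation.

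Next I would establish the pointwise statement $I_{E_n}\to I_A$ a.s. Discarding the countable union of null sets on which $P(A\mid Z_n)\neq g_n(Z_n)$ for some $n$, and intersecting with the a.s.\ event on which $P(A\mid Z_n)\to I_A$, we may work on a probability-one event on which $g_n(Z_n)\to I_A$. On $A$ we then have $g_n(Z_n)\to1>q$, so $g_n(Z_n)\ge q$ for all large $n$, i.e.\ $\omega\in E_n$ eventually; on $A^c$ we have $g_n(Z_n)\to0<q$, so $g_n(Z_n)<q$ for all large $n$, i.e.\ $\omega\notin E_n$ eventually. In either case $I_{E_n}(\omega)\to I_A(\omega)$. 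This is the one place where the strict inequalities $0<q<1$ are genuinely needed.

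Finally, since $|I_{E_n}-I_A|\le1$ and $I_{E_n}\to I_A$ a.s., the bounded convergence theorem yields $\rE|I_{E_n}-I_A|\to0$; but $|I_{E_n}-I_A|=I_{E_n\bigtriangleup A}$, so $P(E_n\bigtriangleup A)\to0$, as required. I do not expect a serious obstacle once $P(A\mid Z_n)\to I_A$ a.s.\ is in hand; the only points requiring care are the bookkeeping that identifies the conditional-probability random variable $P(A\mid Z_n)$ with $g_n(Z_n)$ (and the harmless treatment of states of probability zero), and the observation that the threshold $q$ must be bounded away from both $0$ and $1$ for the argument to go through.
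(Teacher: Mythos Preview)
Your proposal is correct and follows essentially the same approach as the paper's proof: both arguments use the already-established convergence $P(A\mid Z_n)\to I_A$ a.s.\ to show that on $A$ the conditional probability eventually exceeds $q$ while on $A^c$ it eventually falls below $q$, yielding $I_{E_n}\to I_A$ a.s. The only cosmetic differences are that the paper phrases this via $A\subset\liminf B_n\subset\limsup B_n\subset A$ rather than pointwise indicator convergence, and that you make explicit the final bounded-convergence step to pass from $I_{E_n}\to I_A$ a.s.\ to $P(E_n\bigtriangleup A)\to0$, which the paper leaves implicit.
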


\begin{proof}
For simplicity of notation, we suppress the dependence on $q$ and $A$ and denote $B_n =\{Z_n\in S_n(q,1)\}$. Let $Y_n$ be a version of $P(A|Z_n)$ such that
$$
\lim_{n\to\infty} Y_n(\omega) = I_A(\omega)\mbox{ for each } \omega\in\Omega.
$$
Note that $A=\{\omega:\lim_{n\to\infty} Y_n(\omega)=1\}$. Thus, for any $\omega\in A$ there exists an $N(\omega)$ such that, if $n\ge N(\omega)$, then
$$
\left|Y_{n}(\omega)-1\right|<1-q.
$$
It follows that $\omega\in B_{n}$ for all $n\ge N(\omega)$ and so $A\subset\liminf_{n\to\infty} B_n$. Next, note that $A^c=\{\omega:\lim_{n\to\infty} Y_n(\omega)=0\}$. We can similarly show that $A^c\subset\liminf_{n\to\infty} \left(B_n\right)^c$. It follows that
$$
A \subset \liminf_{n\to\infty} B_n \subset \limsup_{n\to\infty} B_n \subset A,
$$
which guarantees that the limit of $B_n$ exists and equals $A$. Since we chose $Y$ to be a particular version of $P(A|Z_n)$, in general, the result only holds almost surely.
\end{proof}

\subsection{Constructive Proof of Theorem \ref{thrm: main}} \label{sec: const proof}

In this section in give a proof based on approximating events in the tail by events in $\mathcal F_{kn}$. A similar approach can be used to prove Kolmogorov's zero-one law and is standard in proofs of the Hewitt-Savage zero-one law, see e.g.\ the proof of Theorem 36.5 in \cite{Billingsley:1995}. Our proof uses the following well-known facts. The first is a version of Corollary 1 on page 169 in \cite{Billingsley:1995} and the second is easy to show.

\begin{prop}\label{prop: approx}
 For any $k$, any $\varepsilon >0$, and large enough $n$, there exists a set $A_{kn}\in \mathcal F_{kn}$, such that $P(A\bigtriangleup A_{kn})<\varepsilon$. 
 \end{prop}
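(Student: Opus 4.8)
The plan is to treat this as the standard statement that, on a $\sigma$-algebra generated by a field, every event can be approximated in measure by an event of the field. First I would identify the relevant field. Since $A\in\mathcal T\subset\mathcal F_{k\infty}$ and $\mathcal F_{k\infty}=\sigma(Z_k,Z_{k+1},\dots)$, set $\mathcal A_k=\bigcup_{n\ge k}\mathcal F_{kn}$. Because $\mathcal F_{kn}\subset\mathcal F_{k(n+1)}$ for every $n\ge k$, the family $\mathcal A_k$ is an increasing union of $\sigma$-algebras, and I would check directly that such a union is a field: it contains $\Omega$, it is closed under complements since each $\mathcal F_{kn}$ is, and it is closed under finite unions because any finite subcollection of its members lies in a common $\mathcal F_{kn}$. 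One also checks $\sigma(\mathcal A_k)=\mathcal F_{k\infty}$: each $Z_j$ with $j\ge k$ is $\mathcal F_{kj}$-measurable with $\mathcal F_{kj}\subset\mathcal A_k$, while conversely $\mathcal F_{kn}\subset\mathcal F_{k\infty}$ for all $n$.

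Next I would invoke the approximation result referenced in the statement, namely (a version of) Corollary 1 on page 169 in \cite{Billingsley:1995}: for a probability measure on $\sigma(\mathcal A_k)$, any $A\in\sigma(\mathcal A_k)$, and any $\varepsilon>0$, there is a set $B\in\mathcal A_k$ with $P(A\bigtriangleup B)<\varepsilon$. Applying this to the given tail event $A$ produces such a $B$. Finally, to recover the precise form of the claim, note that by definition of $\mathcal A_k$ we have $B\in\mathcal F_{kn_0}$ for some $n_0\ge k$, and since the $\sigma$-algebras $\mathcal F_{kn}$ increase in $n$, in fact $B\in\mathcal F_{kn}$ for every $n\ge n_0$; taking $A_{kn}=B$ for all such $n$ gives the conclusion.

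There is no real obstacle here. The only points needing a line of care are verifying that an increasing union of $\sigma$-algebras is a field but need not itself be a $\sigma$-algebra — which is precisely why one obtains approximation rather than exact representation — and confirming that the hypotheses of the cited corollary are met by $\mathcal A_k$ and $P$. Everything else is bookkeeping.
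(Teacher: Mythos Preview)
Your proposal is correct and matches the paper's approach: the paper does not prove this proposition but simply identifies it as a version of Corollary~1 on page~169 of \cite{Billingsley:1995}, and you have supplied exactly the standard details behind that citation. There is nothing to add.
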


\begin{prop}\label{prop: z}
Fix $\alpha,\varepsilon>0$. If $P(A\bigtriangleup B)<\alpha$ and $P(B\bigtriangleup C)<\varepsilon$, then $P(A\bigtriangleup C)<\alpha+\varepsilon$.
\end{prop}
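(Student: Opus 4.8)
The plan is to recognize that $d(E,F):=P(E\bigtriangleup F)$ behaves like a pseudometric on the collection of events, so this proposition is just its triangle inequality with the constants tracked explicitly. The one set-theoretic fact I need is the inclusion
$$
A\bigtriangleup C\subseteq (A\bigtriangleup B)\cup(B\bigtriangleup C).
$$
First I would verify this by a membership check: if $\omega\in A\bigtriangleup C$, then $\omega$ lies in exactly one of $A,C$, say $\omega\in A\setminus C$ (the other case being symmetric). If $\omega\in B$ then $\omega\in B\setminus C\subseteq B\bigtriangleup C$, while if $\omega\notin B$ then $\omega\in A\setminus B\subseteq A\bigtriangleup B$; either way $\omega$ is in the right-hand side. (Equivalently, and perhaps more slickly, $I_{A\bigtriangleup C}=|I_A-I_C|\le|I_A-I_B|+|I_B-I_C|=I_{A\bigtriangleup B}+I_{B\bigtriangleup C}$ pointwise.)

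Given the inclusion, I would finish using monotonicity and finite subadditivity of $P$:
$$
P(A\bigtriangleup C)\le P\big((A\bigtriangleup B)\cup(B\bigtriangleup C)\big)\le P(A\bigtriangleup B)+P(B\bigtriangleup C)<\alpha+\varepsilon,
$$
which is exactly the assertion.

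There is essentially no obstacle here; the only point requiring any care is the verification of the set inclusion, and that is a routine case analysis on membership (or a one-line pointwise estimate on indicators, integrated against $P$).
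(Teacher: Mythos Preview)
Your argument is correct; the paper itself does not supply a proof of this proposition, merely remarking that it ``is easy to show,'' so your straightforward verification of the triangle inequality for $P(\cdot\bigtriangleup\cdot)$ via the inclusion $A\bigtriangleup C\subseteq(A\bigtriangleup B)\cup(B\bigtriangleup C)$ is exactly the kind of routine check the authors had in mind.
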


By Proposition \ref{prop: approx}, there are $A_{kn}\in \mathcal F_{kn}$ with $\lim_k\lim_n A_{kn}=A$ a.s. For any $0\le a\le b\le1$, let $S_{kn}(a,b) = \{i\in S_n:a\le P(A_{kn}|Z_n=i)\le b\}$. For simplicity of notation, we suppress the dependence on $q$, $A$, and $A_{kn}$ and define
\begin{eqnarray*}
S_{n}=S_{n}(q,1), \ S_{kn}=S_{kn}(q,1), \
B_{kn}=(Z_n\in S_{kn}), \mbox{ and } B_{n}=(Z_n\in S_{n}).
\end{eqnarray*}
We now give an approximation result, which is fundamental to the proof and may be of independent interest.

\begin{lemma}\label{lemma: approx}
Fix $A\in \mathcal T$ and $A_{kn}\in \mathcal F_{kn}$ with $\lim_k\lim_n A_{kn}=A$ a.s. If $0<p<q<1$, then:

$a)$ $\lim_k\lim_nP(Z_n\in S_{kn}(q,1))=P(A)$,

$b)$ $\lim_k\lim_nP(Z_n\in S_{kn}(p,q))=0$,

$c)$ $\lim_k\lim_nP(Z_n\in S_{kn}(0,p))=1-P(A)$.
\end{lemma}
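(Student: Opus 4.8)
The plan is to fix $k$ and analyze, as functions of $n$, the two $[0,1]$-valued random variables $\widehat a_n:=P(A_{kn}|Z_n)$ and $\widehat g_n:=P(A|Z_n)$ (I suppress the $k$ in $\widehat a_n$). Since $S_{kn}(q,1)=\{i:P(A_{kn}|Z_n=i)\ge q\}$ we have $\{Z_n\in S_{kn}(q,1)\}=\{\widehat a_n\ge q\}$, and likewise $\{Z_n\in S_{kn}(p,q)\}=\{p\le\widehat a_n\le q\}$ and $\{Z_n\in S_{kn}(0,p)\}=\{\widehat a_n\le p\}$; states $i$ with $P(Z_n=i)=0$ contribute nothing and may be ignored. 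The only genuinely probabilistic ingredient I would use is the Markov property in the form that the past $\sigma(Z_0,\dots,Z_n)$ and the future $\mathcal F_{n\infty}$ are conditionally independent given $Z_n$: since $A_{kn}\in\mathcal F_{kn}\subseteq\sigma(Z_0,\dots,Z_n)$ and $A\in\mathcal T\subseteq\mathcal F_{n\infty}$, this gives $P(A_{kn}\cap A|Z_n)=\widehat a_n\widehat g_n$ a.s.; taking expectations (tower property) yields $\rE[\widehat a_n\widehat g_n]=P(A_{kn}\cap A)$, while trivially $\rE[\widehat a_n]=P(A_{kn})$ and $\rE[\widehat g_n]=P(A)$. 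Subtracting gives the two identities $\rE[\widehat a_n(1-\widehat g_n)]=P(A_{kn}\setminus A)$ and $\rE[(1-\widehat a_n)\widehat g_n]=P(A\setminus A_{kn})$, so both of these expectations are at most $\delta_{kn}:=P(A_{kn}\bigtriangleup A)$.

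The crux is then the pointwise inequality $\widehat a_n(1-\widehat a_n)\le(1-\widehat a_n)\widehat g_n+\widehat a_n(1-\widehat g_n)$, valid because $0\le\widehat a_n\le1$ forces $\widehat a_n(1-\widehat a_n)\widehat g_n\le(1-\widehat a_n)\widehat g_n$ and $\widehat a_n(1-\widehat a_n)(1-\widehat g_n)\le\widehat a_n(1-\widehat g_n)$. Taking expectations gives $\rE[\widehat a_n(1-\widehat a_n)]\le\delta_{kn}$, i.e.\ $\widehat a_n$ is, in $L^1$, close to a $\{0,1\}$-valued random variable once $\delta_{kn}$ is small, and from here all three parts are routine. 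For (b): on $\{p\le\widehat a_n\le q\}$ one has $\widehat a_n(1-\widehat a_n)\ge c_{p,q}:=\min\{p(1-p),q(1-q)\}>0$, so Markov's inequality gives $P(Z_n\in S_{kn}(p,q))\le c_{p,q}^{-1}\delta_{kn}$. For (a): splitting $|I_{\{\widehat a_n\ge q\}}-\widehat a_n|$ over $\{\widehat a_n\ge q\}$, where it equals $1-\widehat a_n\le q^{-1}\widehat a_n(1-\widehat a_n)$, and over $\{\widehat a_n<q\}$, where it equals $\widehat a_n\le(1-q)^{-1}\widehat a_n(1-\widehat a_n)$, gives $\rE|I_{\{\widehat a_n\ge q\}}-\widehat a_n|\le(q(1-q))^{-1}\delta_{kn}$, hence $|P(Z_n\in S_{kn}(q,1))-P(A)|\le|P(A_{kn})-P(A)|+(q(1-q))^{-1}\delta_{kn}\le\bigl(1+(q(1-q))^{-1}\bigr)\delta_{kn}$. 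Part (c) follows by applying the bound of (a) to $\widehat b_n:=1-\widehat a_n=P(A_{kn}^c|Z_n)$ with threshold $1-p$, since $\{Z_n\in S_{kn}(0,p)\}=\{\widehat b_n\ge1-p\}$, $\rE[\widehat b_n]=1-P(A_{kn})$, and $\widehat b_n(1-\widehat b_n)=\widehat a_n(1-\widehat a_n)$.

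To conclude I would, for fixed $k$, let $n\to\infty$ so that $\delta_{kn}\to0$ (this holds for the sequences $(A_{kn})$ furnished by Proposition \ref{prop: approx}), pass each of the three displayed bounds to the limit, and then send $k\to\infty$; the hypothesis $\lim_k\lim_n A_{kn}=A$ a.s.\ is exactly what is needed to justify the iterated limit. I expect the only real obstacle to be this bookkeeping of the double limit together with the verification that, for the sequences at hand, $P(A_{kn}\bigtriangleup A)\to0$ as $n\to\infty$ for each fixed $k$; the substantive content is concentrated in the single application of the Markov property above and the one-line inequality $\widehat a_n(1-\widehat a_n)\le(1-\widehat a_n)\widehat g_n+\widehat a_n(1-\widehat g_n)$, after which the estimates are mechanical.
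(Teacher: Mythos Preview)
Your argument is correct and, in several respects, cleaner than the paper's. Both proofs hinge on the same single use of the Markov property---conditional independence of $A_{kn}\in\mathcal F_{kn}$ and $A\in\mathcal T$ given $Z_n$---but they exploit it differently. The paper introduces a second approximating family $A_{ns}\in\mathcal F_{ns}$, uses Proposition~\ref{prop: inv time} to show $P(A_{kn}B_{kn}^cA_{ns})\le qP(B_{kn}^cA_{ns})$, takes $s\to\infty$ and then $\lim_k\lim_n$ to force $\lim_k\lim_nP(B_{kn}^cA)=0$, and from there deduces \eqref{eq: triangle to zero}. You instead extract $\rE[\widehat a_n\widehat g_n]=P(A_{kn}\cap A)$ directly, then use the elementary inequality $\widehat a_n(1-\widehat a_n)\le(1-\widehat a_n)\widehat g_n+\widehat a_n(1-\widehat g_n)$ to bound $\rE[\widehat a_n(1-\widehat a_n)]\le\delta_{kn}$; all three conclusions then follow by Markov-type estimates with explicit constants depending only on $p,q$. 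Your route avoids the auxiliary sequence $A_{ns}$, treats parts (a)--(c) in parallel rather than deriving (b),(c) from (a), and yields quantitative bounds of the form $C_{p,q}\,\delta_{kn}$ at every step. It also recovers the stronger statement of Remark~\ref{remark: approx}: since $\{\widehat a_n\ge q\}$ is $\sigma(Z_n)$-measurable, $P(A\setminus B_{kn})=\rE[\widehat g_nI_{\{\widehat a_n<q\}}]\le(1-q)^{-1}\rE[(1-\widehat a_n)\widehat g_n]\le(1-q)^{-1}\delta_{kn}$ and $P(B_{kn}\setminus A)=\rE[(1-\widehat g_n)I_{\{\widehat a_n\ge q\}}]\le q^{-1}\delta_{kn}$, so $P(B_{kn}\bigtriangleup A)\le(q(1-q))^{-1}\delta_{kn}$.

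One small point on your final paragraph: the hypothesis $\lim_k\lim_nA_{kn}=A$ a.s.\ only guarantees $\lim_k\lim_n\delta_{kn}=0$, not $\lim_n\delta_{kn}=0$ for each fixed $k$. This does not affect your proof---your displayed bounds are pointwise in $\delta_{kn}$, so passing to $\limsup_n$ and then $\lim_k$ suffices---but the parenthetical appeal to Proposition~\ref{prop: approx} is unnecessary, and you should phrase the conclusion in terms of the iterated limit of $\delta_{kn}$ rather than asserting the inner limit vanishes.
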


\begin{remark}\label{remark: approx}
As with Theorem \ref{thrm: main}, from the proof we will see that the following stronger result holds. For any $0<p<q<1$ we have $a')$
$\lim_k\lim_n(Z_n\in S_{kn}(q,1))= A$ a.s., $b')$ $\lim_k\lim_n(Z_n\in S_{kn}(p,q))=\emptyset$ a.s., and $c')$ $\lim_k\lim_n(Z_n\in S_{kn}(0,p))= A^c$ a.s.
\end{remark}

\begin{remark}
Note that, at time $n$, event $A_{kn}$ is from the past, while event $A\in\mathcal T$ is from the future. Thus, Lemma \ref{lemma: approx} gives a {\em backwards} formulation of the problem, while Theorem \ref{thrm: main} gives a {\em forwards} formulation.
\end{remark}

\begin{proof}[Proof of Lemma \ref{lemma: approx}]
First, note that, since a) is for any $q\in(0,1)$, it immediately gives b), and then, a) and b) together give c). Thus, it suffices to prove a). In fact, as we will show, it suffices to prove
\begin{eqnarray}\label{eq: to show}
\lim_k\lim_nP(AB_{kn})=P(A).
\end{eqnarray}

We begin by showing that \eqref{eq: to show} implies a). For the moment, assume that \eqref{eq: to show} holds for every $A\in\mathcal T$ and every $q\in(0,1)$. This immediately gives $\lim_k\lim_nP(AB^c_{kn})=0$. Further, noting that $P(A|Z_n=i)< q$ is equivalent to $P(A^c|Z_n=i)\geq 1-q$ and applying \eqref{eq: to show} with $A^c$ in place of $A$ and $1-q$ in place of $q$, we get $\lim_k\lim_nP(A^cB^c_{kn})=P(A^c)$, and hence $\lim_k\lim_nP(A^cB_{kn})=0$. It follows that
\begin{eqnarray}\label{eq: triangle to zero}
P(A \bigtriangleup B_{kn})=P(A^cB_{kn})+P(AB^c_{kn})\to0,
\end{eqnarray}
which easily gives a). It remains to verify \eqref{eq: to show}.

Fix $0\le k< n< s<\infty$ and for simplicity set $D_i:=(Z_n=i)$. We have
\begin{eqnarray*}
P(A_{kn}B^c_{kn}A_{ns}) &=& \sum_{i\in S^c_{kn}} P( A_{kn}D_{i}A_{ns}) = \sum_{i\in S^c_{kn}} P( A_{kn}|D_{i}A_{ns})P(A_{ns}D_i)\\
&=& \sum_{i\in S^c_{kn}} P( A_{kn}|D_{i})P(A_{ns}D_i)\\
&\le& q \sum_{i\in S^c_{kn}} P(A_{ns}D_i)=qP(B^c_{kn}A_{ns}),
\end{eqnarray*}
where the second line follows by Proposition \ref{prop: inv time} and the third line by the definition of $S^c_{kn}$. Letting $s\to\infty$, we obtain $P(A_{kn}B^c_{kn}A_{n\infty})\leq qP(B^c_{kn}A_{n\infty})$. Now, noting that $\lim_k\lim_nA_{kn}=A$ a.s.\ and $\lim_nA_{n\infty}=A$ a.s.\ gives $\lim_k\lim_nP(B^c_{kn}A)\leq q\lim_k\lim_nP(B^c_{kn}A)$. Since $q<1$, we have $\lim_{k}\lim_{n}P(B^c_{kn}A)=0$. Finally, since $P(A)=\lim_k\lim_n[P(AB_{kn})+P(AB^c_{kn})]$, we get $\lim_k\lim_nP(AB_{kn})=P(A)$, which is \eqref{eq: to show}.
\end{proof}

We will use Lemma \ref{lemma: approx} to prove Theorem \ref{thrm: main} by approximating $A$ by $A_{kn}$. Most of the heavy lifting is done by the following lemma.

\begin{lemma}\label{lemma: setmin}
If $A\in\mathcal T$, $q\in(0,1)$, and $\varepsilon>0$, then for large enough $k$ and $n$
$$
P(B_{kn}\setminus B_n)<\varepsilon
\mbox{ and }
P(A\setminus B_n)<\varepsilon.
$$
\end{lemma}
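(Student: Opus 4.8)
The plan is to deduce both inequalities from the approximation results already in hand, namely Lemma \ref{lemma: approx} (together with its stronger event-convergence form in Remark \ref{remark: approx}) and the triangle-type estimate \eqref{eq: triangle to zero} established inside the proof of Lemma \ref{lemma: approx}. The key point is that Lemma \ref{lemma: approx} compares $B_{kn} = (Z_n \in S_{kn}(q,1))$ with $A$, whereas here we must compare $B_{kn}$ with $B_n = (Z_n \in S_n(q,1))$, i.e.\ the version using the \emph{true} conditional probabilities rather than the approximated ones. So the heart of the argument is to control $P(B_{kn}\setminus B_n)$ by wedging it between an event described via $A_{kn}$ and an event described via $A$.

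First I would handle $P(A\setminus B_n)$, which is the easier half. From the L\'evy-theorem argument in Section \ref{sec: tech proof}, or directly from part $a'$) of Remark \ref{remark: main} — which gives $(Z_n\in S_n(q,1))\to A$ a.s., equivalently $P(A\bigtriangleup B_n)\to 0$ — we get $P(A\setminus B_n)\le P(A\bigtriangleup B_n)<\varepsilon$ for all large $n$; note there is no $k$-dependence here, so the claim for this term is immediate once $n$ is large. (If one wants to stay entirely within the constructive stream and not invoke Section \ref{sec: tech proof}, one can instead observe that \eqref{eq: triangle to zero} plus Proposition \ref{prop: z} lets one pass from $A$ to $B_{kn}$ to $B_n$ once the first half of the lemma is known, so the two estimates can be bootstrapped together.)

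For $P(B_{kn}\setminus B_n)$, the idea is that an index $i\in S_{kn}(q,1)$ means $P(A_{kn}\mid Z_n=i)\ge q$, while $i\notin S_n(q,1)$ means $P(A\mid Z_n=i) < q$; for such $i$ the two conditional probabilities differ by a definite amount, so the total mass $P(Z_n\in S_{kn}(q,1)\setminus S_n(q,1))$ forces $\sum_i |P(A_{kn}\mid Z_n=i) - P(A\mid Z_n=i)|\,P(Z_n=i)$ to be large, i.e.\ it forces $\rE\big|P(A_{kn}\mid Z_n) - P(A\mid Z_n)\big|$ to be large. Concretely, on $B_{kn}\setminus B_n$ we have $P(A_{kn}\mid Z_n) - P(A\mid Z_n) > 0$ is not automatic, so instead I would split $S_{kn}(q,1)\setminus S_n(q,1)$ at a level $q' \in (p,q)$ — this is exactly why the statement is phrased with a spare parameter $p<q$ available — using part $b$) of Lemma \ref{lemma: approx} for the part where $P(A\mid Z_n=i)\in[q',q)$ and a direct gap estimate for the part where $P(A\mid Z_n=i)<q'$, on which $|P(A_{kn}\mid Z_n=i)-P(A\mid Z_n=i)| > q-q'$. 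The latter mass is then bounded by $(q-q')^{-1}\rE\big|P(A_{kn}\mid Z_n)-P(A\mid Z_n)\big|$, and since $P(A_{kn}\mid Z_n)\to P(A\mid Z_n)$ in $L^1$ as $n\to\infty$ and then $k\to\infty$ (this follows from $A_{kn}\to A$ a.s.\ with $k,n$ iterated, via $\rE|P(A_{kn}\mid Z_n)-P(A\mid Z_n)| \le P(A_{kn}\bigtriangleup A)\to 0$), this too goes to $0$.

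The step I expect to be the main obstacle is making the double limit in $k$ and $n$ rigorous while simultaneously choosing the auxiliary threshold $q'$: one must fix $q'\in(p,q)$ first, then choose $k$ large, then $n$ large, and check that the application of Lemma \ref{lemma: approx}$b$) (which is itself an iterated $k$-then-$n$ limit) composes correctly with the $L^1$ bound on $P(A_{kn}\mid Z_n)-P(A\mid Z_n)$. A clean way to avoid juggling is to prove the single statement $\lim_k\lim_n P(B_{kn}\bigtriangleup B_n)=0$ and then read off both displayed inequalities; this also dovetails with \eqref{eq: triangle to zero} and Proposition \ref{prop: z}, since $P(B_{kn}\bigtriangleup B_n)\le P(B_{kn}\bigtriangleup A)+P(A\bigtriangleup B_n)$ reduces everything to the already-proven $P(B_{kn}\bigtriangleup A)\to 0$ and $P(A\bigtriangleup B_n)\to 0$ — in which case the gap-estimate argument above is not even needed and the lemma is essentially a repackaging of \eqref{eq: triangle to zero} plus Remark \ref{remark: main}$a'$). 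I would present that short route as the main proof and relegate the explicit gap estimate to a remark if at all.
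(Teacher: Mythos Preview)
Both routes you sketch are circular within the constructive stream. The ``short route'' bounds $P(B_{kn}\bigtriangleup B_n)$ by $P(B_{kn}\bigtriangleup A)+P(A\bigtriangleup B_n)$ and invokes $P(A\bigtriangleup B_n)\to 0$, i.e.\ Remark~\ref{remark: main}$a'$); but in Section~\ref{sec: const proof} that is exactly the statement Lemma~\ref{lemma: setmin} is being used to \emph{prove}, so you are assuming the conclusion. (If you import Section~\ref{sec: tech proof} the lemma is indeed trivial, but then the whole constructive proof is redundant.) The gap-estimate route has the same defect in disguise: your ``Part~1,'' the set where $P(A\mid Z_n=i)\in[q',q)$, is a piece of $S_n(q',q)$, defined via the \emph{true} event $A$; Lemma~\ref{lemma: approx}$b$) only controls $S_{kn}(p,q)$, defined via $A_{kn}$, so it does not apply, and bounding $P(Z_n\in S_n(q',q))$ directly is again Theorem~\ref{thrm: main}$b$), not yet available. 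Your $L^1$ estimate $\rE|P(A_{kn}\mid Z_n)-P(A\mid Z_n)|\le P(A_{kn}\bigtriangleup A)$ is correct and handles the piece with a definite gap; the way to rescue the other piece is to split at a level $q''>q$ according to $P(A_{kn}\mid Z_n)$ rather than $P(A\mid Z_n)$, so that Lemma~\ref{lemma: approx}$b$) genuinely applies --- but that is not what you wrote.

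The paper's argument is different and avoids any auxiliary threshold: it is a direct squeeze using only \eqref{eq: triangle to zero} and the definition of $S_n$. Write $P(B_{kn})=a+b$ and $P(AB_{kn})=c+d$, each split over $S_{kn}\cap S_n$ versus $S_{kn}\setminus S_n$; on $S_n^c$ one has $P(A\mid Z_n=i)<q$, whence $d\le qb$ and $P(AB_{kn})\le a+qb$. Combined with $P(AB_{kn})>P(A)-\alpha$ and $a+b=P(B_{kn})<P(A)+\alpha$ from \eqref{eq: triangle to zero}, this gives $(1-q)b<2\alpha$, so $b=P(B_{kn}\setminus B_n)<2\alpha/(1-q)$. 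The second inequality then follows from $A\setminus B_n\subset(A\setminus B_{kn})\cup(B_{kn}\setminus B_n)$ --- the bootstrap you mention, which is fine once the first half is established non-circularly.
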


\begin{proof}
We begin by writing
\begin{eqnarray*}
P(B_{kn}) &=& \sum_{i\in S_{kn}S_n}P(Z_{n}=i) + \sum_{i\in (S_{kn}\setminus S_n)}P(Z_{n}=i)=:a+b,\\ \notag
P(AB_{kn})&=& \sum_{i\in S_{kn}S_n}P(Z_{n}=i)P(A|Z_n=i) + \sum_{i\in (S_{kn}\setminus S_n)}P(Z_{n}=i)P(A|Z_n=i)=:c+d.
\label{eq: ab}
\end{eqnarray*}
Clearly, $c\le a$ and, by the definition of $S_n$, $d\le q\sum_{i\in (S_{kn}\setminus S_n)}P(Z_{n}=i)=qb$. Therefore $P(AB_{kn})\leq a +qb$. For any $\alpha>0$, \eqref{eq: triangle to zero} implies that for large $k$ and $n$ we have $P(AB_{kn})>P(A)-\alpha$ and $P(B_{kn})<P(A)+\alpha$. It follows that $a+b<P(A)+\alpha$, $P(A)-\varepsilon<a+qb$, thus $P(B_{kn}\setminus B_{n})=b<2\alpha /(1-q)$. Since $q\in(0,1)$, this is less than $\varepsilon$ for an appropriate choice of $\alpha$, which gives the first part. Now, combining this with the fact that, by \eqref{eq: triangle to zero}, for large enough $k$ and $n$ we have $P(A\setminus B_{kn})<\varepsilon/2$ and the fact that $A\setminus B_{n} \subset (A\setminus B_{kn})\cup(B_{kn}\setminus B_{n})$  completes the proof.
\end{proof}

\begin{proof}[Proof of Theorem \ref{thrm: main}]
As in the proof of Lemma 3, b) and c) follow immediately from a), so we just need to prove a).  In fact we will prove 
that for any $\varepsilon>0$ and large enough $n$
$$
P(A\bigtriangleup B_n)=P(A\setminus B_n)+P(B_n\setminus A)<\varepsilon.
$$
This result is stronger than what is needed for Theorem \ref{thrm: main} and will give us the stronger results formulated in Remark \ref{remark: main}. By Lemma \ref{lemma: setmin}, for large enough $n$ we have $P(A\setminus B_n)<\varepsilon/2$. Next, note that $P(B_n\setminus A)=P(A^c\setminus B^c_n)$ and that $B_n^c=(Z_n\in S^*_{n}(1-q,1))$, where $S^*_n(1-q,1)=\{i\in S_n: 1-q\le P(A^c|Z_n=i)\}$. Thus, we can apply Lemma \ref{lemma: setmin} with $A^c$ and $1-q$ to get that for large enough $n$, $P(B_n\setminus A)<\varepsilon/2$.
\end{proof}

\subsection{Proofs for Theorems in Section \ref{sec: MC past}} \label{sec: proofs}

\begin{proof}[Proof of Theorem \ref{thrm: exist finite}]
For each $s$, $D(s)$ is a compact set and, since $P_{st}:\mathbb R^{N_s}\mapsto \mathbb R^{N_t}$ is a continuous transformation and images of compact sets under continuous transformations are compact (see e.g.\ Theorem 26.5 in \cite{Munkres:2000}), $\Delta(s,t)$ is compact for all $s<t\le0$. Since $\Delta(s,t)\ne\emptyset$ for each $s< t$ and \eqref{eq: subsetseq} holds, Cantor's Intersection Theorem (see Theorem 26.9 in \cite{Munkres:2000}) implies that $\Delta(t)\ne\emptyset$ for each $t\le0$.

We next apply a diagonal process to show that, if $m_t\in\Delta(t)$, then there exists an $m_{t-1}\in\Delta(t-1)$ with $m_t=P_{t-1}(m_{t-1})$. To see this, first note that by the definition of $\Delta(t)$, $m_t\in \Delta(s,t)$ for all $s<t$. Thus, for any  $s<t-1$, there exists an $m_{t-1}(s)\in\Delta(s,t-1)$ such that $P_{t-1}(m_{t-1}(s)) = m_{t}$. Since $\Delta(s,t-1)$ is compact for each $s<t-1$, and \eqref{eq: subsetseq} holds, there exists a sequence $(s_n)\subset\mathbb Z_-$ with $s_n\to-\infty$ such that $m_{t-1}(s_n)\to m_{t-1}$ for some $m_{t-1}\in D(t-1)$ and $m_{t-1} \in \Delta(s,t-1)$ for each $s$. Thus $m_{t-1} \in \Delta(t-1)$. From here, the continuity of $P_{t-1}$ implies that $m_t=\lim_{n\to\infty} P_{t-1}(m_{t-1}(s_n)) = P_{t-1}(m_{t-1})$, as required. From here, a simple inductive argument completes the proof.
\end{proof}

The proof of Theorem \ref{thrm: gen} is similar to that of Theorem \ref{thrm: exist finite} and is thus omitted.

\begin{proof}[Proof of Theorem \ref{thrm: delta is simplex}]
We begin by proving a). Let $M$ be any subsequential limit of $(N_t)_{t\in\mathbb N_-}$ and note that $M\ge N$. Since the $N_t$'s are integers, there is a subsequence $(P_{u_n t})$ such that each $P_{u_nt}$ has $M$ rows. Within this subsequence, the first rows of the matrices form a tight sequence of probability measures (as any sequence of probability measures on a finite set is tight). It follows that there is a subsequence that converges to a probability measure. Applying this idea to the other rows of the matrices and potentially taking further subsequences, shows that there exists a subsequence $P_{s_nt}$ and a stochastic matrix $P_t^*$ with $M$ rows such that $P_{s_{n}t}\to P_t^*$ as $n\to\infty$. Now, fix $m_t\in\Delta(t)$ and note that, from the proof of Theorem \ref{thrm: exist finite}, there exists a sequence $(m_s)_{s<t}$ with $m_s\in\Delta(s)$ and $m_t = P_{st}(m_s)$. Since $\{m_{s_n}\}$ is a tight sequence, there exists a subsequence $m_{s_{n_k}}$ and a probability measure $m\in\mathbb R^M$ with $m_{s_{n_k}}\to m$ as $k\to\infty$. It follows that
$$
m_t=\lim_{k\to\infty} m_{s_{n_k}} P_{s_{n_k}t} = m P_t^*= \sum_{i=1}^M m(i) e_i^{(M)} P_t^*= \sum_{i=1}^M m(i) a_i,
$$
which is a convex combination of the $a_i$s and thus belongs to their convex hull. Conversely, fix a weight vector $p\in\mathbb R^M$ with $p(i)\ge0$ and $\sum_{i=1}^M p(i)=1$, and consider the convex combination $m=\sum_{i=1}^M p(i) a_i$. We have
$$
m=\sum_{i=1}^M p(i) a_n= \sum_{i=1}^M p(i) e_i^{(M)} P_t^* = p P_t^*= \lim_{n\to\infty} p P_{s_{n}t}.
$$
Since, by definition, $p P_{s_{n}t}\in\Delta(s,t)$ for every $s\in[s_n,t-1]$ and $\Delta(s,t)$ is compact for each $s<t$, it follows that the limit $m$ must be in each $\Delta(s,t)$. Thus $m\in\Delta(t)$. The definition of $N$ implies that the above holds with $N$ in place of $M$.

We now turn to b).  Arguments similar to those in a) imply that any $P_t^*$ of the required form is an $M\times N_t$-dimensional stochastic matrix for some $M\ge N$. 
Clearly, the distribution at time $t$ is unique if and only if $|\Delta(t)|=1$. In light of a), this holds if and only if $P_t^*$ maps each $e_i^{(M)}$ to the same vector, which is equivalent to all of the row of $P_t^*$ being the same. It follows that $\Delta(t)$ contains exactly one element, which is this row. 

To prove c), it suffices to show that, under the assumption $|\Delta(t)|=1$, the limit exists. Assume that there are two subsequences with $P_{ s^{(1)}_{n} t }\to P_t^{*1}$ and $P_{ s^{(2)}_{n} t } \to P_t^{*2}$. Tightness arguments similar to those in the proof of a) imply that both $P_t^{*1}$ and $P_t^{*2}$ are stochastic matrices. From here, b) implies that all rows of both $P_t^{*1}$ and $P_t^{*2}$ are equal to the unique vector in $\Delta(t)$. The fact that, in this case, the two matrices have the same dimensions completes the proof. 
\end{proof}

\begin{lemma}\label{lemma: delta compact}
Assume that Condition P holds and let $V$ be the set in that condition.  For all $t\in V$ and all $s<t\le0$, the following hold: 
a) both $P_{st}$ and $\Delta(s,t)$ are tight; b) $\Delta(s,t)$ is compact.
\end{lemma}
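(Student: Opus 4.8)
The plan is to derive both parts from a single structural observation: by \eqref{eq: comp stoch matrix} we may write $P_{st}=P_{t-1}\circ P_{s,t-1}$, and since $t\in V$ the \emph{last} factor $P_{t-1}$ is tight. Post-composing with a tight matrix forces the whole product to be tight, and — crucially — with a tightness bound that is inherited from $P_{t-1}$ and does not depend on $s$. Part a) is the substance; part b) then follows at once from a) together with Prohorov's theorem as quoted in the text (a subset of $D$ is compact iff it is closed and tight), \emph{modulo} the point that one must also check closedness.

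For a), fix $t\in V$, $s<t$, and $\varepsilon>0$, and let $N=N_\varepsilon(t)$ be the integer supplied by Condition P, so that $\sum_{k=1}^{N}p_{t-1}(j,k)\ge 1-\varepsilon$ for every $j\in\mathbb N$. If $s=t-1$ there is nothing to prove for $P_{st}=P_{t-1}$. If $s<t-1$, then \eqref{eq: comp stoch matrix} with $u=t-1$ gives $p_{st}(i,k)=\sum_{j}p_{s,t-1}(i,j)p_{t-1}(j,k)$, and hence for each $i$
\[
\sum_{k=1}^{N}p_{st}(i,k)=\sum_{j=1}^{\infty}p_{s,t-1}(i,j)\sum_{k=1}^{N}p_{t-1}(j,k)\ge(1-\varepsilon)\sum_{j=1}^{\infty}p_{s,t-1}(i,j)=1-\varepsilon ,
\]
where the last equality uses that $P_{s,t-1}$ is stochastic. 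Thus $P_{st}$ is tight (with the same $N$). Since every element of $\Delta(s,t)=P_{st}(D)$ is of the form $mP_{st}$ with $m\in D$, Tonelli's theorem gives $\sum_{k=1}^{N}(mP_{st})(k)=\sum_i m(i)\sum_{k=1}^{N}p_{st}(i,k)\ge 1-\varepsilon$, so $\Delta(s,t)$ is tight as well. This proves a).

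For b), by Prohorov it remains only to show $\Delta(s,t)$ is closed in $\ell^1$. The natural approach: given $m_n\in D$ with $m_nP_{st}\to w$ in $\ell^1$, use a diagonal extraction (the coordinates of the $m_n$ all lie in $[0,1]$) to pass to a subsequence converging coordinatewise to some $\mu\ge 0$ with $\|\mu\|\le 1$; Fatou's lemma then yields $\mu P_{st}\le w$ coordinatewise, and since $\|w\|=1$ one would like $\|\mu\|=1$, forcing $\mu\in D$ and $w=\mu P_{st}\in\Delta(s,t)$. \emph{This last step is the main obstacle}: tightness of $P_{st}$ controls the images but says nothing about the preimages, so a priori mass of the $m_n$ can escape to infinity and $\|\mu\|<1$. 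The escaped mass $\eta=1-\|\mu\|$, pushed forward by $P_{st}$, is an $\eta$-multiple of a point of the closed convex hull of the rows of $P_{st}$ (a tight, hence compact, set by a)), and one must argue — using convexity of $\Delta(s,t)$ — that $w$ still lies in $\Delta(s,t)$; making this absorption rigorous is the delicate part. A clean way to avoid it entirely is to carry the closures $\overline{\Delta(s,t)}$ through the whole development: by a) and Prohorov each $\overline{\Delta(s,t)}$ is compact, and $P_n(\overline{\Delta(s,n)})\subset\overline{\Delta(s,n+1)}$ by continuity, which is all that is needed in Theorem \ref{thrm: exist countable}.
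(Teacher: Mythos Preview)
Your argument for a) is correct and essentially identical to the paper's: both factor $P_{st}$ through $P_{t-1}$ via \eqref{eq: comp stoch matrix} and push the tightness bound $N_\varepsilon(t)$ for $P_{t-1}$ through, first to the rows of $P_{st}$ and then to arbitrary mixtures $mP_{st}$.

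For b) your hesitation is well founded, and in fact the ``absorption'' you sketch cannot be completed: $\Delta(s,t)$ need not be closed even when $P_{t-1}$ is tight. Take $P_{t-1}$ with rows $p_i(1)=1-1/i$, $p_i(i{+}1)=1/i$, all other entries zero; this matrix is tight (given $\varepsilon$, take $N\ge 1/\varepsilon+1$), and $p_i=e_iP_{t-1}\to e_1$ in $\ell^1$, so $e_1\in\overline{\Delta(t{-}1,t)}$. But for $j\ge2$ one has $(mP_{t-1})(j)=m(j{-}1)/(j{-}1)$, so $mP_{t-1}=e_1$ forces $m=0$; hence $e_1\notin\Delta(t{-}1,t)$ and part b) as stated is false. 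The paper's own proof proceeds via sequential compactness: it extracts a Prohorov subsequence $m^{(k_\ell)}\to m$, lifts to preimages $q^{(k_\ell)}\in D$, and then asserts that $\{q^{(k_\ell)}\}$ is Cauchy ``from \eqref{eq: bounded trans}''. That inequality, however, says $\|P(x)\|\le\|x\|$, which propagates Cauchyness \emph{from} preimages \emph{to} images, not the reverse; the example above shows the reverse direction genuinely fails, so the paper's step has the same gap you identified.

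Your proposed repair---work with the closures $\overline{\Delta(s,t)}$---is the correct fix for the downstream application. By a) and Prohorov each $\overline{\Delta(s,t)}\subset D$ is compact; closures preserve the nesting \eqref{eq: subsetseq}; and continuity of $P_n$ gives $P_n\bigl(\overline{\Delta(s,n)}\bigr)\subset\overline{P_n(\Delta(s,n))}\subset\overline{\Delta(s,n{+}1)}$. Thus Theorem~\ref{thrm: gen} applies with $D(n)=\overline{\Delta(t_{n+1},t_n)}$ in the proof of Theorem~\ref{thrm: exist countable}, and nothing else in that argument changes.
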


\begin{proof}
We begin with a). Fix $s<t\le0$. From \eqref{eq: comp stoch matrix} it follows that 
\begin{eqnarray}\label{eq: which N epsilon}
\sum_{k=1}^{N_\varepsilon(t)}  p_{st}(i,k)= \sum_{j=1}^\infty p_{s,t-1}({i,j}) \sum_{k=1}^{N_\varepsilon(t)} p_{t-1}({j,k})\ge \sum_{j=1}^\infty p_{s,t-1}({i,j})(1-\varepsilon)= 1-\varepsilon,
\end{eqnarray}
which gives the first part. Next, fix $m\in\Delta(s,t)$, let $m'=P_{st}(m)$, and note that
$$
\sum_{k=1}^{N_\varepsilon(t)}  m'(k) = 
\sum_{i=1}^\infty m(i) \sum_{k=1}^{N_\varepsilon(t)} p_{st}({i,k}) \ge \sum_{i=1}^\infty m(i) (1-\varepsilon)=1-\varepsilon,
$$
which gives the second part.

We now turn to b). We will show that $\Delta(s,t)$ is sequentially compact, which is equivalent to compactness since we are in a metric space. Let $\{m^{(k)}\}$ be a sequence in $\Delta(s,t)$. Since $\Delta(s,t)$ is tight, Prohorov's Theorem implies that there exists a subsequence $\{m^{(k_\ell)}\}$ that converges to some $m\in D$. We must show that $m\in\Delta(s,t)$. Since $m^{(k_\ell)}\in\Delta(s,t)$, there exists a $q^{(k_\ell)}\in D$ with $P_{st}(q^{(k_\ell)})=m^{(k_\ell)}$. Note that $\{m^{(k_\ell)}\}$ is a Cauchy sequence. From \eqref{eq: bounded trans} it follows that $\{q^{(k_\ell)}\}$ is also a Cauchy sequence and since $\ell^1$ is a Banach space, there exists an $q\in\ell^1$ with $q^{(k_\ell)}\to q$. By continuity of $P_{st}$ it follows that $P_{st}(q)=m$. In light of the discussion just below \eqref{eq: bounded trans}, $q\in D$ and hence $m\in\Delta(s,t)$.
\end{proof}

\begin{proof}[Proof of Theorem \ref{thrm: exist countable}]
Let $V$ be as in Lemma \ref{lemma: delta compact}. The proof is based on Theorem \ref{thrm: gen}. Let $V_n=\ell^1$ for $n\in\mathbb Z_-$ be the metric spaces. Let $t_1>t_2>\dots$ be the elements of $V$ in decreasing order and let $D(n)=\Delta(t_{n+1},t_n)$ for $n\in\mathbb Z_-$. Note that each $D(n)\subset V_n$ is a compact set by Lemma \ref{lemma: delta compact}. The continuous transformation associated with $V_n$ will be $P_{t_{n}t_{n-1}}$, $n<0$. All of these objects satisfy the required properties and we can use Theorem \ref{thrm: gen} to show that there exists a sequence of vectors $(m_{t_n})_{n\in\mathbb Z_-}$ such that for each $n$, $m_{t_n}\in D$ and $m_{t_n}=P_{t_{n+1}t_n}(m_{t_{n+1}})$. For any $t$ with $t_{n+1}<t<t_n$, we can take $m_t = P_{t_{n+1}t}(m_{t_{n+1}})$. This gives the result.
\end{proof}

We now give two examples where Condition P does not hold.\\

\noindent\textbf{Example 1.} This is an example where Condition P does not hold and a solution to \eqref{eq: defn m count} does not exist. 
Consider a symmetric random walk on countable state space $\mathbb Z$, where at each time, with probability $.5$ we take one step in the positive direction and with probability $.5$ we take a step in the negative direction. In this case, letting $E=\{2n:n=0,1,\dots\}$ be the even numbers, for $n>0$ we have
\begin{eqnarray*}
p_{-n,0}({i,k}) &=& (.5)^{n}{n \choose .5(n+|i-k|)} I_{[|i-k|\le n]} I_{[n-|i-k|\in E]}.
\end{eqnarray*}
It is easy to see that this does not satisfy Condition P. For the sake of contradiction, assume that the required probability measures $m_n$ exist. Now applying the monotonicity of binomial coefficients (see e.g.\ \cite{Andrica:Andreescu:2009}) and the well-known bound $\sqrt{2\pi}n^{n+.5}e^{-n}\le n!\le \sqrt{2e\pi}n^{n+.5}e^{-n}$ (see \cite{Robbins:1955}) gives
\begin{eqnarray*}
p_{-2n,0}({i,k}) \le (.5)^{2n}{2n \choose n} \le \sqrt\frac{e}{2\pi} n^{-1/2}.
\end{eqnarray*}
Hence
\begin{eqnarray*}
m_0(k) &=&  \sum_{i\in\mathbb Z} m_{-2n}(i) p_{-2n,0}(i,k) \le  \sum_{i\in\mathbb Z} m_{-2n}(i)  \sqrt\frac{e}{2\pi} n^{-1/2}= \sqrt\frac{e}{2\pi} n^{-1/2}\to0.
\end{eqnarray*}
Thus, $m_0(k)=0$ for each $k$, which contradiction the assumption that this is a probability measure.\\

\noindent\textbf{Example 2.} This is an example where Condition P does not hold, but a solution to \eqref{eq: defn m count} nevertheless does exist. 
A simple, but general situation, where this always holds is when each $P_n$ is onto. In this case, the image of $D$ through $P_n$ is $D$. However, in light of Lemma \ref{lemma: delta compact} and the fact that $D$ is not compact, Condition P does not hold in this case. A simple concrete example is a walk on $\mathbb Z$, where, for some fixed integer $\ell$, 
$$
p_{n}(i,k) = I_{[k=i+\ell]}.
$$
If $\ell=0$, then every state is absorbing. If $\ell=1$, then we have a walk similar to the one in Example $1$, but with a different probability of moving in the positive direction.\\

\begin{lemma}\label{lemma: conv}
a) Let $\{m_t\}_{t\in\mathbb Z_-}$ be a sequence in $D$. If there exists an $m\in D$, an infinite stochastic matrix $P_t^*$, and a sequence $(s_n)\subset\mathbb Z$ with $m_{s_n}\to m$ and $P_{s_nt}\to P^*_t$, then
$$
\lim_{n\to\infty} P_{s_nt}(m_{s_n})\to P_t^*(m).
$$ 
b) If Condition U holds, then for each $t\le0$ and any sequence $(s_n)\subset\mathbb Z$, there exists a further subsequence $s_{n_k}$ and an infinite stochastic matrix $P_t^*$ such that $P_{s_nt}\to P^*_t$.
\end{lemma}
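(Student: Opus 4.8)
The plan is to prove the two parts separately, since part (a) is a pure $\ell^1$-continuity estimate that uses only the fact that $P_t^*$ is stochastic, whereas part (b) is where Condition U enters, through a uniform tightness argument followed by a diagonal extraction.

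For (a), I would write
$$
P_{s_nt}(m_{s_n})-P_t^*(m)=P_{s_nt}(m_{s_n}-m)+\bigl(P_{s_nt}(m)-P_t^*(m)\bigr)
$$
and bound the two pieces in the $\ell^1$ norm. The first piece has norm at most $\|m_{s_n}-m\|$, which tends to $0$ by the contraction property \eqref{eq: bounded trans} (if $m_{s_n}\to m$ is meant only coordinatewise, Scheff\'e's lemma first upgrades it to $\ell^1$ convergence, since $m\in D$). For the second piece I would use
$$
\|P_{s_nt}(m)-P_t^*(m)\|\le\sum_{i=1}^\infty m(i)\sum_{k=1}^\infty|p_{s_nt}(i,k)-p_t^*(i,k)|;
$$
for each fixed $i$ the $i$th rows of $P_{s_nt}$ and of the stochastic matrix $P_t^*$ both lie in $D$ and converge coordinatewise, so Scheff\'e's lemma gives $\sum_k|p_{s_nt}(i,k)-p_t^*(i,k)|\to0$, and since this quantity never exceeds $2$ while $\sum_i 2m(i)<\infty$, dominated convergence closes the estimate.

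For (b), I would first reduce to the case $s_n\to-\infty$: if $(s_n)$ admits no such subsequence it is bounded below, hence eventually constant along a subsequence, and there is nothing to prove. The heart of the matter is the claim that for each $\varepsilon>0$ there is an $N_\varepsilon$, independent of the row index and of $s$, with $\sum_{k\le N_\varepsilon}p_{st}(i,k)\ge1-\varepsilon$ for all $i$ and all sufficiently small $s$. To get this I would fix $u\in V$ with $u<t$ (possible because $V$ is an infinite subset of $\mathbb N_-$); for $s<u$, equation \eqref{eq: which N epsilon} combined with Condition U shows that the rows of $P_{su}$ are uniformly tight with the Condition-U constant, and then, writing $P_{st}=P_{ut}\circ P_{su}$ as in \eqref{eq: comp stoch matrix}, I would observe that the image of a uniformly tight family of probability vectors under any one fixed stochastic matrix is again uniformly tight — only finitely many rows of $P_{ut}$, namely those on which the tight family concentrates, need to be controlled, and each of these is itself a tight probability vector. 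Granting the claim, a standard diagonal argument over the countably many pairs $(i,k)$ produces a subsequence $s_{n_k}$ and an array $P_t^*=(p_t^*(i,k))$ with $p_{s_{n_k}t}(i,k)\to p_t^*(i,k)$ for all $i,k$; the uniform tightness passes to the limit to give $\sum_{k\le N_\varepsilon}p_t^*(i,k)\ge1-\varepsilon$ for every $i$, while Fatou's lemma gives $\sum_k p_t^*(i,k)\le1$, so every row of $P_t^*$ lies in $D$ and $P_t^*$ is stochastic.

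I expect the main obstacle to be the tightness bookkeeping in part (b), in particular carrying the tightness of the individual matrices $P_{n-1}$, $n\in V$, through compositions $P_{st}$ when $t\notin V$; the clean device here is the observation that a single, possibly non-tight, stochastic matrix maps a uniformly tight family of probability vectors to a uniformly tight family. Once that is in place, the diagonal extraction and the verification that the limiting array is genuinely stochastic rather than merely substochastic are routine Fatou and dominated-convergence arguments.
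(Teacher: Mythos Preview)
Your proposal is correct, and both parts take a somewhat different route from the paper's proof.

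For part (a), the paper invokes Skorohod's representation theorem: it realizes $m$ and the $m_{s_n}$ as distributions of $\mathbb N$-valued random variables $X$ and $X_n$ on a common space with $X_n\to X$ a.s., observes that integer-valued almost-sure convergence forces $X_n=X$ eventually, and then passes to the limit inside $\mathrm E\,p_{s_nt}(X_n,k)$ by dominated convergence. Your decomposition $P_{s_nt}(m_{s_n})-P_t^*(m)=P_{s_nt}(m_{s_n}-m)+(P_{s_nt}-P_t^*)(m)$, handled via the contraction bound \eqref{eq: bounded trans} and Scheff\'e's lemma, is more elementary---it avoids constructing an auxiliary probability space---and in fact yields the stronger conclusion of $\ell^1$ convergence rather than the coordinatewise convergence the paper establishes.

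For part (b), the paper splits into cases: for $t\in V$ it extracts convergent subsequences row by row exactly as you do, but for $t\notin V$ it first passes to some $t_0\in V$ with $t_0<t$, extracts a limit $P_{t_0}^*$ there, and then defines $P_t^*=P_{t_0t}\circ P_{t_0}^*$, appealing back to part (a) to justify the convergence $P_{s_n^*t}\to P_t^*$. Your device---that a single fixed stochastic matrix maps a uniformly tight family to a uniformly tight family---lets you treat both cases at once by picking any $u\in V$ with $u<t$ and carrying the tightness of $P_{su}$ through $P_{ut}$; this is cleaner in that it decouples (b) from (a). The diagonal extraction and the Fatou/tightness check that the limiting array is genuinely stochastic are the same in both arguments.
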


\begin{proof}
We begin with a). By Skorohod's Representation Theorem, there is a probability space $(\Omega,\mathcal F,P)$ and $\mathbb N$-valued random variables $X,X_1,X_2,\dots$ on this space such that $X$ has distribution $m$, $X_n$ has distribution $m_{s_n}$, and $X_n(\omega)\to X(\omega)$ for each $\omega\in\Omega$.  Let $\mathrm E$ be the expectation operator on this probability space. Since the random variables are $\mathbb N$-valued, there is a function $M(\omega)$ such that if $n\ge M(\omega)$, then $X_n(\omega)=X(\omega)$. It follows that $p_{s_nt}({X_n(\omega) ,k})\to p_{t}^*(X(\omega),k)$ for each $\omega\in\Omega$. Let $m'_{s_n}=P_{s_nt}(m_{s_n})$, $m'=P_t^*(m)$, and note that
\begin{eqnarray*}
\lim_{n\to\infty} m'_{s_n}(k) &=& \lim_{n\to\infty} \sum_{i=1}^\infty m_{s_n}(i) p_{s_nt}({i,k})= \lim_{n\to\infty} \rE p_{s_n,t}({X_n ,k}) \\
&=& \rE p_{t}^*(X,k)=  \sum_{i=1}^\infty m(i) p^*_{t}({i,k})= m'(k),
\end{eqnarray*}
where we interchange limit and expectation using dominated convergence and the fact that $p_{s_n,t}({X_n ,k}) \le1$.

We now turn to b). Let $V$ be as in Lemma \ref{lemma: delta compact}. First, fix $t\in V$ and consider the sequence of matrices $\{P_{s_nt}:s_n<t\}$. Lemma \ref{lemma: delta compact} and \eqref{eq: which N epsilon} imply that the first rows of these matrices form a tight sequence of probability measures. Thus, there is a $m_1\in D$ and a sequence $(s_n^{(1)})\subset(s_n)$ such that $p_{s^{(1)}_nt}(1,\star)\to m_1$. Similarly, there is a $m_2\in D$ and a further subsequence $(s_n^{(2)})\subset (s_n^{(1)})$ such that $p_{s^{(2)}_nt}(\ell,\star)\to m_\ell$ for $\ell=1,2$. Continuing in this manner, we can find a sequence $m_1,m_2,m_3,\dots\in D$ and a collection of nested sequences $(s_n^{(1)})\supset (s_n^{(2)})\supset (s_n^{(3)})\supset\cdots$ with $\lim_{n\to\infty} p_{s^{(k)}_nt}(\ell,\star)= m_\ell$ for $k=1,2,\dots$ and $\ell=1,2,\dots,k$. Now, set $s^*_n=s_n^{(n)}$ and let $P_t^*$ be the infinite stochastic matrix such that $p_t^*(\ell,\star)=m_\ell$, $\ell=1,2,\dots$. It follows that $P_{s^*_nt}\to P_t^*$. 

Now, assume that $t\in\mathbb Z_-$ is not an element of $V$. By Condition U, there exists a $t_0<t$ with $t_0\in V$. Thus there is a sequence $(s^*_n)\subset(s_n)$ and a matrix $P_{t_0}^*$ such that $P_{s^*_nt_0}\to P^*_{t_0}$. Noting that $P_{s^*_nt}= P_{t-1}\circ\cdots\circ P_{t_0+1}\circ P_{t_0}\circ P_{s^*_nt_0}$, taking $P_{t}^*= P_{t-1}\circ\cdots\circ P_{t_0+1} \circ P_{t_0}\circ P_{t_0}^*$, and applying a) gives the result.
\end{proof}

\begin{proof}[Proof of Theorem \ref{thrm: uniq count}]
Let $V$ be as in Lemma \ref{lemma: delta compact}. The proof is similar to that of Theorem \ref{thrm: delta is simplex}, with several changes. First, we now use Lemma \ref{lemma: conv} to guarantee the existence of a sequence $(s_n)\subset V$ and an infinite stochastic matrix $P_t^*$ with $P_{s_nt}\to P^*_t$. Second, now $\Delta(t)$ is the convex hull of the vectors $a_1,a_2,\dots$, where $a_i = P_t^*(e_i)$ and $e_i\in D$ is the vector with $e_i(i)=1$ and $e_i(j)=0$ for $j\ne i$. Third, we now use Lemma \ref{lemma: delta compact} to guarantee that $(m_{s_n})$ is tight and Lemma \ref{lemma: conv} to show that 
$$
m_t=\lim_{k\to\infty}  P_{s_{n_k}t} (m_{s_{n_k}}) = P_t^*(m)= \sum_{i=1}^\infty m(i) e_i P_t^*= \sum_{i=1}^\infty m(i) a_i.
$$
Finally, we use  Lemma \ref{lemma: delta compact} to guarantee that $\Delta(s,t)$ is compact for each $s<t$.
\end{proof}

\section{Conclusions}\label{sec: conc}

In this paper we reviewed the zero-one law for MCs and gave two rigorous and detailed proofs. These had been missing from the literature. Further, in the case where the MC is indexed by $\mathbb Z$ or $\mathbb Z_-$ we gave a version of this law for the entrance $\sigma$-algebra. In the corresponding discussion, we noted an interesting dichotomy in two commonly used definitions of a MC. Further, to better understand when MCs on $\mathbb Z$ and $\mathbb Z_-$ exist, we extended a classical result due to Kolmogorov (1936) \cite{Kolm1936}. 

We conclude this paper by discussing several open problems. First, it seems that one should be able to extend the zero-one law for MCs to countable Markov Decision Processes with ``tail'' functionals, i.e.\ those that can be represented as indicators of some events in the tail $\sigma$-algebra, see \cite{Son91a} for a discussion of such processes.  Second, it would be interesting to obtain necessary and sufficient conditions for the existence of countable MCs indexed by $\mathbb Z$. We only  obtained a sufficient condition. Finally, as far as we know, the problem of characterizing the events in the tail and entrance $\sigma$-algebras for countable MCs has not been solved. Results are only available in the finite case, see e.g.\  \cite{Cohn70}, \cite{Cohn:1974}, and \cite{Iosifescu:1979}.
 
\section{Historical Notes}\label{sec: hist}

Homogeneous MCs are among the most fundamental concepts in probability theory and one of the most widely used probabilistic tools. However, in applications with a changing environment, these models are inadequate and one must study \nhm MCs instead. Whether due to their importance in applications or because of intrinsic interest, many prominent mathematicians and probabilists have been attracted to the study of \nhm MCs.  Even a short list is impressive: starting from A.~Markov himself as early as 1910. Other early pioneers include S.~Bernstein, R.~Dobrushin, W.~Doeblin, E.B.~Dynkin, and of course A.~Kolmogorov. This early work was continued and extended in papers by authors such as O.O.~Aalen, D.~Blackwell, X.R.~Cao, H.~Cohn, J.L.~Doob, D.~Griffeath, J.~Hajnal, D.~Hartfiel, W.J.~Hopp, G.A.~Hunt, M.~Iosifesku, J.F.~Kingman, S.E.~Kuznetsov, V.~Maksimov, S.~Molchanov, L.~Saloff-Coste, E.~Seneta, S.R.~Varadhan, D.~Williams and many others. 

While at first glance \nhm MCs may appear to have very little structure, much structure has nevertheless been uncovered. In particular, there are a number of results that aim to understand the events in the tail $\sigma$-algebra and how a MC approaches these events, while making essentially no assumptions on the underlying sequence of transition matrices. The zero-one law presented in this paper, and especially the backwards formulation given in Lemma \ref{lemma: approx}, fits into the lineage of such results. 

In the remainder of this section, we give a short overview of the history of several results of this type, which, over time, have evolved into the so-called Decomposition-Separation (DS) Theorem. This theorem generalizes to the nonhomogeneous case the well-known decomposition of the state space of a homogeneous Markov chain into transient and recurrent classes and further into cyclical subclasses. In the nonhomogeneous case, the decomposition is not just of the state space, but of the space-time representation. The only assumption is that the number of states is bounded.

The decomposition part of the theorem is primarily due to the work of Blackwell and Cohn. Motivated by Kolmogorov (1936) \cite{Kolm1936}, Blackwell studied the properties of MCs in reverse time. In the seminar paper Blackwell (1945) \cite{Bla1945}, he gave a partition of the space-time representation of the state space. To describe this, it helps to introduce several definitions, although the terminology was developed later. If $(S_n)_{n\in\mathbb Z_-}$ is the sequence of state spaces, then a sequence $J=(J_{n})_{n\in\mathbb Z_-}$, with $J_{n}\subset S_{n}$ is called a {\it jet}. A tuple of jets $(J^{1},...,J^{c})$ is called a {\it partition }of $(S_{n})_{n\in\mathbb Z_-}$ if $(J_{n}^{1},...,J_{n}^{c})$ is a partition of $S_{n}$ for every $n$. Blackwell proved that there exists a partition $(T^{1},...,T^{c})$ of $(S_{n})_{n\in\mathbb Z_-}$ such that the trajectories of the MC will, with probability one, reach and eventually stay in one of the jets $T^{i}$, $i=1,...,c$. This result was extended, in the works of Cohn, see \cite{Cohn70}, \cite{Cohn:1974}.  Cohn reformulated Blackwell's results in the context of MCs in forward time and proved that the tail $\sigma$-algebra of any nonhomogeneous MC consists of a finite number of atomic (indecomposable) sets, each of them related with a jet $T^{k}$ of Blackwell's partition. He also simplified Blackwell's proofs. 

The separation part of the DS theorem was proved by Sonin, one of the authors of this paper, in a series of papers \cite{Son87}, \cite{Son91a}, \cite{Son91b}. Here it was shown that there exist partitions into jets having the additional property that the {\it expected number of transitions} of trajectories of any MC $(Z_{n})$ between jets is{\it \ finite} on the infinite time interval. This separation property was not obvious and its existence had not been noted previously. Surveys about the DS theorem and related results can be found in \cite{Son96} and \cite{Son08}. The DS theorem has found applications in several areas, including simulated annealing, consensus algorithms, and probabilistic automata, see e.g.\ \cite{CoFi99}, \cite{Chatterjee:Tracol:2012}, \cite{Bolouki:Malhame:2016}, \cite{Etesami:2019}, and the references therein.

\end{document}